\def\N{\mathbb{N}}
\def\Z{\mathbb{Z}}
\def\F{\mathbb{F}}
\newtheorem{theorem}{Theorem}[section]
\newtheorem{proposition}[theorem]{Proposition}
\newtheorem{corollary}[theorem]{Corollary}
\newtheorem{lemma}[theorem]{Lemma}
\newtheorem{definition}[theorem]{Definition}
\newtheorem{conjecture}[theorem]{Conjecture}
\begin{document}
\title{All bi-unitary perfect polynomials over $\F_2$ with only Mersenne primes as odd divisors}
 \author{Luis H. Gallardo - Olivier Rahavandrainy \\
Univ. Brest, UMR CNRS 6205\\
Laboratoire de Math\'ematiques de Bretagne Atlantique\\
e-mail: luis.gallardo@univ-brest.fr - olivier.rahavandrainy@univ-brest.fr}
\maketitle
\begin{itemize}
\item[a)]
Keywords: bi-unitary divisors, Mersenne primes, finite fields
\item[b)]
Mathematics Subject Classification (2010): 11T55, 11T06.
\end{itemize}

{\bf{Abstract}}

The paper is about  a polynomial variant of a classical arithmetic problem. More precisely,
we obtain all non-splitting bi-unitary perfect polynomials
over the prime field of two elements,
which have only Mersenne polynomials as odd irreducible divisors.
{\section{Introduction}}
We consider the following functions and notions over $\F_2[x]$, as generalizations of the integer case (see \cite{Yamada} and references therein). Let $S \in \F_2[x]$ be a nonzero polynomial. A divisor $D$ of $S$ is called {\emph{unitary}} if $\gcd(D,S/D)=1$. It is {\emph{bi-unitary}} if $\gcd_u(D,S/D)=1$, where $\gcd_u(S,T)$  is the greatest common unitary divisor of $S$ and $T$. We let  $\sigma(S)$ (resp. $\sigma^*(S)$, $\sigma^{**}(S)$) denote the sum of all divisors
(resp. unitary divisors, bi-unitary divisors) of $S$. As usual, $\omega(S)$ denotes the number of distinct irreducible factors of $S$. The functions $\sigma$, $\sigma^*$ and $\sigma^{**}$ are all multiplicative (Definition \ref{defmultipl}). One says \cite{Canaday,BeardU,BeardBiunitary} that $S$ is \emph{perfect} (resp. \emph{unitary perfect}, \emph{bi-unitary perfect}) if $\sigma(S) = S$
(resp. $\sigma^*(S)=S$, $\sigma^{**}(S)=S$). $S$ is \emph{odd} \cite{Gall-Rahav5} if $\gcd(S,x(x+1))=1$ and it is \emph{even}, otherwise.

A {\it{Mersenne} $(prime)$} \cite{Gall-Rahav-mersenn} is a (an irreducible) polynomial of the form $1+x^a(x+1)^b$, where $a$ and $b$ are two coprime positive integers.

Finally, we say that a bi-unitary perfect polynomial (\emph{b.u.p.} polynomial) is \emph{indecomposable} if it is not a product of two coprime non-constant bi-unitary perfect polynomials.

One can find many papers (from $1941$ to the present) studying perfect, (bi-)unitary perfect polynomials over $\F_2$ in the mathematical literature. Canaday \cite{Canaday} and Beard \cite{BeardU}  begun the study of perfect, respectively, unitary perfect polynomials.
Later, Gallardo and Rahavandrainy \cite{Gall-Rahav7,Gall-Rahav5, Gall-Rahav11}, found the list of all
these polynomials with $\omega(A) \leq 4$, and the list of all of them which are divisible only by $x$, $x+1$ and by Mersenne primes \cite{Gall-Rahav12,Gall-Rahav-up-allmersenne}.

We now consider indecomposable bi-unitary perfect polynomials (over $\F_2$)
(say, \emph{i.b.u.p.} polynomials)
with only Mersenne primes as odd divisors.
Beard \cite{BeardBiunitary} discovered thirteen of them: $C_1,\ldots,C_{13}$ (cf. {\bf{Notation}}, there si a misprint for $C_6$, in \cite{BeardBiunitary}). Our main result (Theorem~\ref{casemersenne}) completes that list (with new polynomials: $C_{14},\ldots,C_{23}$).

The paper is a bit technical but elementary. It is organized as follows.
Section \ref{preliminaire} contains the main technical results useful for the proof of Theorem \ref{casemersenne}. Some of them give basic properties of b.u.p. polynomials. Namely,
Corollary \ref{nombreminimal} proves that
if $A$ is a non-constant b.u.p. polynomial, then $x(x+1)$ divides $A$ so that $\omega(A) \geq 2$.  Moreover, Corollary \ref{nombreminimal} together with \cite[Theorem 5]{BeardBiunitary} state that the only
b.u.p. polynomials over $\F_2$ with exactly two distinct prime
divisors are $x^2(x+1)^{2}$ and $x^{2^n-1}(x+1)^{2^n-1}$, for any nonnegative integer $n$.

Note \cite{Wall} that $6, 60$ and $90$ are the only b.u.p numbers.\\
\\
{\bf{Notation}}
\begin{itemize}
\item
The set of integers (resp. of nonnegative
integers, of positive integers) is denoted by $\Z$ (resp. $\N$, $\N\sp{*}$).
\item
For $S, T \in \F_2[x]$ and for $m \in \N^*$, $S^m \mid T$ (resp. $S^m \| T$) means that $S$ divides $T$ (resp. $S^m \mid T$ but $S^{m+1} \nmid T$). We let $\overline{S}$  denote the polynomial defined by $\overline{S}(x) = S(x+1)$.
\item \text{We put}
\end{itemize}
\begin{itemize}
\item[ ] $M_1=1+x+x^2 = \sigma(x^2),\ M_2=1+x+x^3,\ M_3=\overline{M_2}=1+x^2+x^3,$
\item[ ] $M_4=1+x+x^2+x^3+x^4 = \sigma(x^4), M_5=\overline{M_4}=1+x^3+x^4,$
\item[ ] $C_1 =x^3(x+1)^4M_1, C_2=x^3(x+1)^5{M_1}^2, C_3=x^4(x+1)^4{M_1}^2,$
\item[ ] $C_4=x^6(x+1)^6{M_1}^2, \
C_5=x^4(x+1)^5{M_1}^3, \  C_6=x^7(x+1)^8{M_5},$
\item[ ] $C_7=x^7(x+1)^9{M_5}^2, \
C_8 =x^8(x+1)^8M_4M_5, \ C_9 =x^8(x+1)^9M_4{M_5}^2,$
\item[ ] $C_{10}=x^7(x+1)^{10}{M_1}^2M_5, \
C_{11}=x^7(x+1)^{13}{M_2}^2{M_3}^2,$
\item[ ] $C_{12}=x^9(x+1)^{9}{M_4}^2{M_5}^2, \
C_{13}=x^{14}(x+1)^{14}{M_2}^2{M_3}^2$
\end{itemize}
\begin{itemize}
\item[ ] $C_{14}=x^{8}(x+1)^{10}{M_1}^2M_4M_5,\
C_{15}=x^{8}(x+1)^{12}{M_1}^2{M_2}{M_3}M_4,$
\item[ ] $C_{16}=x^{10}(x+1)^{13}{M_1}^2{M_2}^2{M_3}^2M_4, \
C_{17}=x^{13}(x+1)^{13}{M_1}^2{M_2}^4{M_3}^4M_4M_5,$
\item[ ] $C_{18}=x^{12}(x+1)^{13}{M_1}^2{M_2}^3{M_3}^3, \ C_{19}=x^{9}(x+1)^{13}{M_2}^2{M_3}^2{M_4}^2,$
\item[ ] $C_{20}=x^{8}(x+1)^{13}{M_2}^2{M_3}^2{M_4}, \ C_{21}=x^{9}(x+1)^{10}{M_1}^2{M_4}^2{M_5},$
\item[ ] $C_{22}=x^{7}(x+1)^{12}{M_1}^2{M_2}{M_3}, C_{23}=x^{9}(x+1)^{12}{M_1}^2{M_2}{M_3}{M_4}^2.$
\end{itemize}
The polynomials $M_1,\ldots, M_5$ are all Mersenne primes.

\begin{theorem}\label{casemersenne}
Let $A=x^a(x+1)^b {P_1}^{h_1} \cdots {P_r}^{h_r} \in \F_2[x]$ be such that
the $P_j$'s are Mersenne primes, $a,b,h_j \in \N$ and $\omega(A) \geq 3$.
Then $A$ is i.b.u.p. if and only if
$A, \overline{A} \in \{C_j: 1 \leq j \leq 23 \}$.
\end{theorem}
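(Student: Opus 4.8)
The plan is to prove this classification by a finite, if lengthy, case analysis driven by the multiplicativity of $\sigma^{**}$ and the very rigid arithmetic of Mersenne primes. I would begin by recording the explicit formula for $\sigma^{**}$ on prime powers: for an irreducible $P$, one has $\sigma^{**}(P^n) = \sigma(P^n)$ when $n$ is even and $\sigma^{**}(P^n) = \sigma(P^n) - P^{n/2}\cdot(\text{correction})$ only when $n$ is even, so in fact $\sigma^{**}(P^n)=\sigma(P^n)$ for $n$ odd and $\sigma^{**}(P^n)=\sigma(P^n)+P^{n/2}$ (the middle divisor is removed) for $n$ even. Writing $A = x^a(x+1)^b\prod P_j^{h_j}$ and expanding $\sigma^{**}(A)=\prod\sigma^{**}(\cdot)=A$, the key structural fact is that the only admissible odd irreducible factors of any $\sigma^{**}(P^n)$ — equivalently of the product on the left — must themselves be Mersenne primes. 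So the first real step is a \emph{closure lemma}: the set of Mersenne primes that can appear is forced by the splitting behaviour of $\sigma(M_i^{n})$ and $\sigma^{**}(M_i^n)$, and one shows that outside the listed $M_1,\dots,M_5$ no new Mersenne prime can be generated without violating the perfect-polynomial equation. This mirrors exactly the strategy used in the ordinary and unitary cases (\cite{Gall-Rahav12}, \cite{Gall-Rahav-up-allmersenne}), and I would lean heavily on those results, since $\sigma^{**}$ and $\sigma$ agree on odd-exponent factors.

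Next I would reduce the exponents. Because $A$ is required to be \emph{indecomposable} and bi-unitary perfect, and because by Lemma~\ref{nombreminimal} we know $x(x+1)\mid A$ and $\omega(A)\ge 3$, I would derive sharp upper bounds on each $h_j$ and on $a,b$. The mechanism is degree-counting combined with the $2$-adic structure of the exponents: $\sigma(P^n)$ with $P$ Mersenne introduces specific new factors only for particular residues of $n$, and the requirement that $\sigma^{**}(A)=A$ forces the multiset of irreducible factors on both sides to coincide with multiplicity. I would set up the divisibility bookkeeping as a system: for each irreducible $Q\mid A$, the exponent of $Q$ in $\prod_j \sigma^{**}(P_j^{h_j})\cdot\sigma^{**}(x^a)\sigma^{**}((x+1)^b)$ must equal its exponent in $A$. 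Tracking $x$, $x+1$, $M_1,\dots,M_5$ separately yields a finite integer-programming-type system in $(a,b,h_1,\dots,h_r)$ once the closure lemma caps $r$.

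The hardest part, and where most of the labour lives, will be the \emph{exhaustive bounded search} together with the \emph{indecomposability filtering}. After the closure lemma pins down the finite set of possible odd primes and degree estimates bound the exponents, one is left with a large but finite list of candidate factorizations; for each I must verify whether $\sigma^{**}(A)=A$ actually holds, and then discard any candidate that splits as a product of two coprime nonconstant bi-unitary perfect polynomials (for instance the trivial product $x^2(x+1)^2$ with another factor). The surviving list, closed under the symmetry $A\mapsto\overline{A}$ (which preserves the bi-unitary-perfect property since it merely swaps the roles of $x$ and $x+1$ and permutes $M_2\leftrightarrow M_3$, $M_4\leftrightarrow M_5$), should be exactly $\{C_1,\dots,C_{23}\}$. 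I expect the main obstacle to be controlling the \emph{even-exponent correction terms} $+P^{n/2}$ in $\sigma^{**}$, which break the clean multiplicative cancellations available in the purely $\sigma$ case and substantially enlarge the candidate tree; keeping the case split organized by the parities of $a,b,h_j$ will be essential to making the enumeration tractable rather than combinatorially explosive.

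For the converse direction I would simply verify directly, for each of the twenty-three $C_j$ and their conjugates $\overline{C_j}$, that $\sigma^{**}(C_j)=C_j$ and that $C_j$ admits no nontrivial coprime bi-unitary-perfect factorization; these are finite computations of a fixed small size, and together with the necessity argument above they close the equivalence.
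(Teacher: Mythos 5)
Your outline reproduces the paper's architecture: a closure step showing the only admissible odd primes are $M_1,\dots,M_5$, exponent bounds obtained by matching the multiplicity of each irreducible factor on the two sides of $A=\sigma^{**}(A)$, a finite machine search, and direct verification of sufficiency. But there is a genuine gap, and it sits exactly where you yourself place ``the main obstacle'': the even-exponent case of $\sigma^{**}$. You propose to work with $\sigma^{**}(P^{2n})=\sigma(P^{2n})+P^{n}$ and to ``control the correction terms''. In that form the problem does not move: $\sigma(P^{2n})+P^{n}$ is not visibly related to anything whose irreducible factors are known, so neither your closure lemma (which Mersenne primes can divide it?) nor your multiplicity bookkeeping can be carried out for even exponents --- and even exponents are unavoidable, since $a$, $b$ and several $h_i$ are even in most of the $C_j$. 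The missing idea is the factorization
$$\sigma^{**}(T^{2n})=1+T+\cdots+T^{n-1}+T^{n+1}+\cdots+T^{2n}=(1+T^{n+1})\,\sigma(T^{n-1})=(1+T)\,\sigma(T^{n})\,\sigma(T^{n-1}),$$
together with its iterates (Corollaries \ref{aboutsigmastar2} and \ref{expressigmastar}), which writes every $\sigma^{**}(T^{c})$ as a power of $1+T$ times factors of the form $\sigma(T^{2r})$ and $(\sigma(T^{u-1}))^{2^{\alpha}}$. This is what permits the wholesale reuse of the Canaday-type results and of the earlier $\sigma$ and $\sigma^{*}$ classifications for \emph{all} exponents, not only the odd ones where $\sigma^{**}=\sigma$, and it is what collapses your ``integer-programming-type system'' into the few parity cases of Lemmas \ref{mersenneaetb} and \ref{mersennabodd}. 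Without it, your closure lemma and your exponent bounds are asserted rather than proved.

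Two smaller points. Your opening description of $\sigma^{**}$ on prime powers is garbled (you first assert $\sigma^{**}(P^{n})=\sigma(P^{n})$ for $n$ even before correcting yourself); only the final version is right. And the ``indecomposability filtering'' at the end of your search is vacuous: by Lemma \ref{nombreminimal} every nonconstant b.u.p.\ polynomial is divisible by $x(x+1)$, so two coprime nonconstant b.u.p.\ factors cannot coexist. The place where that hypothesis (or, equally, Lemma \ref{nombreminimal} applied to an odd polynomial) does real work is earlier, in Lemma \ref{gcdMjsigmA1}, to force $A_2=1$ and hence $A=A_1$.
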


We sketch our method (initiated in \cite{Rahav} and \cite{Gall-Rahav-up-allmersenne}).
In addition to $x$ and $x+1$, determine all possible Mersenne primes which divide such b.u.p. polynomials, give upper bounds of their
exponents without considering several distinct cases. Compute with {\tt{Maple}}, to obtain the list of all such b.u.p. polynomials.

\section{Preliminaries}\label{preliminaire}
We need the following results. Some of them are obvious or (well) known, so we omit
their proofs. We put $\text{${\mathcal{M}}:= \{M_1,M_2,M_3,M_4,M_5\}$}$.
\begin{definition} \label{defmultipl}
{\emph{A nonzero map $f$, from $\F_2[x] \setminus \{0\}$ to $\F_2[x]$,  is {\emph{multiplicative}} if $f(A_1A_2) = f(A_1) f(A_2),$ for all coprime  nonzero polynomials $A_1$ and $A_2$.}}
\end{definition}
\begin{lemma}\label{coprime}
Let $T \in \F_2[x]$ be irreducible and $n \in \N^*$. Then $$\gcd(\sigma(T^n), \sigma(T^{n-1})) = 1.$$
\end{lemma}
\begin{proof}
If $P$ is an irreducible common divisor of $\sigma(T^n)$ and $\sigma(T^{n-1})$, with $P \not= 1$, then $P$ divides $\sigma(T^n) + \sigma(T^{n-1}) = T^n$. Hence, $P = T$ and so $T$ divides $\sigma(T^n)$, which is impossible.
\end{proof}
\begin{lemma}\label{sigmastardegree}
Let $S, T \in \F_2[x] \setminus \{0,1\}$ be such that $\deg(S) = \deg(T)$. Then, $\sigma^{**}(S)=T$ if and only if $T$ divides $\sigma^{**}(S).$
\end{lemma}
\begin{proof} It follows from the fact that $\sigma^{**}$ is degree-preserving.
\end{proof}
\begin{lemma}\label{multiplicativity}
If $A = A_1A_2 \not= 0$ is b.u.p. over $\F_2$ and if
$\gcd(A_1,A_2) =~1$, then $A_1$ is b.u.p. if and only if
$A_2$ is b.u.p..
\end{lemma}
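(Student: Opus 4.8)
The plan is to deduce the statement directly from the multiplicativity of $\sigma^{**}$, which is already recorded in the preliminaries, together with the fact that $\F_2[x]$ is an integral domain, so that nonzero factors may be cancelled.

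First I would invoke multiplicativity. Since $\gcd(A_1,A_2)=1$, we have
$\sigma^{**}(A)=\sigma^{**}(A_1A_2)=\sigma^{**}(A_1)\,\sigma^{**}(A_2)$.
Because $A$ is b.u.p, the left-hand side equals $A=A_1A_2$, so the whole content of the hypothesis is packaged into the single identity $\sigma^{**}(A_1)\,\sigma^{**}(A_2)=A_1A_2$ in $\F_2[x]$.

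Next I would establish the forward implication. Assuming $A_1$ is b.u.p, i.e.\ $\sigma^{**}(A_1)=A_1$, substitution into that identity gives $A_1\,\sigma^{**}(A_2)=A_1A_2$. Since $A_1\neq 0$ and $\F_2[x]$ has no zero divisors, I may cancel $A_1$ to obtain $\sigma^{**}(A_2)=A_2$, that is, $A_2$ is b.u.p. The reverse implication then follows at once by interchanging the roles of $A_1$ and $A_2$, since both the identity and the hypotheses are symmetric in them.

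There is essentially no serious obstacle here; the only point requiring care is that the cancellation step is legitimate, which it is precisely because $A_1$ is a nonzero polynomial in the integral domain $\F_2[x]$ (equivalently, every nonzero element of $\F_2[x]$ is regular). If one wished to make the argument completely self-contained without naming the domain property, one could instead record the harmless degree bookkeeping $\deg\sigma^{**}(A_i)=\deg A_i$, forced because $A_i$ is the unique highest-degree bi-unitary divisor of itself and all leading coefficients over $\F_2$ equal $1$; but this is not needed, and the cancellation argument is the cleanest route.
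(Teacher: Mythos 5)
Your proof is correct and is precisely the standard argument the paper has in mind: Lemma~\ref{multiplicativity} is one of the results the authors omit as ``obvious or (well) known,'' and the intended justification is exactly your combination of the multiplicativity of $\sigma^{**}$ on coprime factors with cancellation of the nonzero polynomial $A_1$ in the integral domain $\F_2[x]$. Nothing further is needed.
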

\begin{proof}
Since $A$ is b.u.p. and $\sigma^{**}$ is multiplicative, we get
$$A_1 A_2 = \sigma^{**}(A_1A_2) = \sigma^{**}(A_1) \sigma^{**}(A_2).$$ So,
$\sigma^{**}(A_1) = A_1 \iff  \sigma^{**}(A_1) A_2 = A_1A_2 \iff \sigma^{**}(A_1) A_2 = \sigma^{**}(A_1A_2).$
Thus
$$\sigma^{**}(A_1) = A_1 \iff
 \sigma^{**}(A_1) A_2 = \sigma^{**}(A_1) \sigma^{**}(A_2) \iff A_2 = \sigma^{**}(A_2).$$
\end{proof}
\begin{lemma} [\cite{BeardBiunitary}, Theorem 2]\label{translation}
If $A$ is b.u.p. over $\F_2$, then $\overline{A}$
is also b.u.p.
\end{lemma}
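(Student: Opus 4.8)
The plan is to observe that the conjugation map $S \mapsto \overline{S}$, given by the substitution $x \mapsto x+1$, is a ring automorphism of $\F_2[x]$, and then to verify that every ingredient in the definition of $\sigma^{**}$ is preserved by it. First I would record that over $\F_2$ the substitution $x \mapsto x+1$ is an involution, since $(x+1)+1 = x$; hence $\overline{\overline{S}} = S$ and the map $S \mapsto \overline{S}$ is a bijection of $\F_2[x]$ onto itself. Being a substitution homomorphism it is additive and multiplicative, so it is a ring automorphism; in particular it preserves degrees, divisibility ($D \mid S \iff \overline{D} \mid \overline{S}$), quotients ($\overline{S/D} = \overline{S}/\overline{D}$ whenever $D \mid S$), and greatest common divisors ($\overline{\gcd(S,T)} = \gcd(\overline{S},\overline{T})$).

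Next I would propagate these facts through the layered definitions. Since a divisor $E \mid S$ is unitary precisely when $\gcd(E, S/E) = 1$, and since bar preserves both quotients and gcd, $E$ is a unitary divisor of $S$ if and only if $\overline{E}$ is a unitary divisor of $\overline{S}$; consequently $\overline{\gcd_u(S,T)} = \gcd_u(\overline{S},\overline{T})$. Applying the same reasoning one level up, a divisor $D \mid S$ is bi-unitary, i.e. $\gcd_u(D, S/D) = 1$, if and only if $\overline{D}$ is a bi-unitary divisor of $\overline{S}$. Thus bar restricts to a bijection between the bi-unitary divisors of $S$ and those of $\overline{S}$.

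Finally I would use additivity to finish. Because bar is additive and carries the set of bi-unitary divisors of $S$ bijectively onto that of $\overline{S}$, summing over the bi-unitary divisors $D$ of $S$ gives $\sigma^{**}(\overline{S}) = \sum_{D} \overline{D} = \overline{\sum_{D} D} = \overline{\sigma^{**}(S)}$. If $A$ is b.u.p, then $\sigma^{**}(A) = A$, and applying bar yields $\sigma^{**}(\overline{A}) = \overline{\sigma^{**}(A)} = \overline{A}$, so $\overline{A}$ is b.u.p.

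There is no genuine obstacle in this argument: the whole proof rests on the single observation that $x \mapsto x+1$ is a ring automorphism of $\F_2[x]$, after which each structural property required to define $\sigma^{**}$ transfers formally. The only point deserving a moment's care is checking that the automorphism respects the \emph{nested} gcd/unitary structure appearing in the definition of bi-unitarity, which it does precisely because it respects gcd and quotients at every level.
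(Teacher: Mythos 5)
Your proof is correct: the paper states Lemma \ref{translation} without proof, listing it among the facts that are ``obvious or (well) known,'' and your argument --- that $x \mapsto x+1$ induces a ring automorphism (indeed an involution) of $\F_2[x]$ preserving divisibility, quotients, $\gcd$, hence $\gcd_u$ and the bi-unitary divisor structure, so that $\sigma^{**}(\overline{S}) = \overline{\sigma^{**}(S)}$ by additivity --- is exactly the standard justification the authors implicitly rely on. Your one flagged point of care, that the nested $\gcd_u$ in the definition of bi-unitarity also transfers, is handled correctly by applying the automorphism at each level.
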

Lemma \ref{complete2} follows from \cite[Lemma 2.6]{Gall-Rahav13} and from Canaday's paper
\cite[Lemmas 4, 5, 6, Theorem 8 and Corollary on page 728]{Canaday}.
\begin{lemma}\label{complete2}
Let $P, Q \in \F_2[x]$ be odd and irreducible, and let $n,m \in \N^*$.
\begin{itemize}
\item[\rm{(i)}] If $P$ is a Mersenne prime, then $\sigma(P^{2n})$ is odd and square-free.
\item[\rm{(ii)}] If $P$ is a Mersenne prime and if $P=P^*$, then $P=M_1$ or $P=M_4$.
\item[\rm{(iii)}]If $\sigma(x^{2n}) = PQ$ and $\overline{P} = \sigma(x^{2m})$, then $n=4$, $m=1$ and $Q = P(x^3)$.
\item[\rm{(iv)}]If $\sigma(x^{2n})$ is only divisible by Mersenne primes, then $2n \in \{2,4,6\}$.
More precisely, $\sigma(x^2)=M_1, \ \sigma(x^4) = M_4$ and $\sigma(x^6) = M_2M_3$.
\item[\rm{(v)}]If $\sigma(x^{2n})$ is a Mersenne prime, then $2n \in \{2,4\}$.
\item[\rm{(vi)}]If $\sigma(x^{n}) = \sigma((x+1)^{n})$,
then $n= 2^r-2$, for some $r \in \N^*$.
\item[\rm{(vii)}]If $\sigma(P^{2n}) = Q^m$, then $m =1$.
\end{itemize}
\end{lemma}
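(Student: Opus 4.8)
The plan is to treat Lemma~\ref{complete2} as a package of seven facts about $\sigma$ at prime powers, proving the self-contained statements directly and isolating the two genuinely arithmetic ones, (iii) and (iv), that rest on the cited factorization results of \cite{Gall-Rahav13} and \cite{Canaday}. First I would record the logical dependencies: (v) will be a corollary of (iv), and (vii) a corollary of (i), so the real work concentrates on (i), (ii), (iii), (iv) and (vi).

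For (i), write $f := \sigma(P^{2n}) = \sum_{i=0}^{2n} P^i$. Oddness is immediate: since $P$ is Mersenne, $P(0)=P(1)=1$, so $f(0)=f(1)=1$ (an odd number of $1$'s summed in $\F_2$), whence $\gcd(f,x(x+1))=1$. For squarefreeness I would differentiate, obtaining in characteristic $2$ the identity $f' = P' \cdot \sigma(P^{n-1})^2$. Then $\gcd(f,\sigma(P^{n-1}))=1$, because $f \equiv P^{2n} \pmod{\sigma(P^{n-1})}$ while $\gcd(P,\sigma(P^{n-1}))=1$; and $\gcd(f,P')=1$ because the Mersenne shape $P=1+x^a(x+1)^b$ forces $P' = x^{a-1}(x+1)^{b-1}((a+b)x+a)$ to reduce, after taking coefficients mod $2$ and using $\gcd(a,b)=1$, to a monomial in $x$ and $x+1$, hence coprime to the odd polynomial $f$. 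Thus $\gcd(f,f')=1$ and $f$ is squarefree. Part (vii) then follows at once: a squarefree polynomial cannot equal $Q^m$ with $m\ge 2$, so $m\in\{0,1\}$.

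For (ii), I would use that a self-reciprocal irreducible of degree $\ge 2$ has even degree (its roots pair as $\alpha,\alpha^{-1}$ with $\alpha\ne\alpha^{-1}$), so $a+b$ is even and, with $\gcd(a,b)=1$, both $a$ and $b$ are odd. Writing out $P=P^*$ for $P=1+x^a(x+1)^b$ collapses to the polynomial identity $(x+1)^b(x^a+1)=x^{a+b}+1$. Comparing multiplicities of the nontrivial $a$-th roots of unity (simple on the left since $a$ is odd, but forced to multiplicity $2^t\ge 2$ on the right since $a+b$ is even) forces $a=1$; the identity then reads $(x+1)^{b+1}=x^{b+1}+1$, i.e. $b+1=2^t$, so $P=\sigma(x^{2^t})$, which is irreducible only for $2^t\in\{2,4\}$, giving $M_1$ and $M_4$. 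Part (vi) is a parallel direct computation: $\sigma(x^h)=\sigma((x+1)^h)$ says $\sigma(x^h)$ is bar-invariant, and clearing denominators in $\frac{x^{h+1}+1}{x+1}=\frac{(x+1)^{h+1}+1}{x}$ reduces to $(x+1)^{h+2}=x^{h+2}+1$, which in characteristic $2$ holds iff $h+2=2^n$. Part (v) then follows from (iv): a Mersenne prime is in particular divisible only by Mersenne primes, so $2r\in\{2,4,6\}$, and $2r=6$ is excluded because $\sigma(x^6)=M_2M_3$ is composite.

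The main obstacle is (iv), and with it (iii), where I would lean on the cited Canaday lemmas. The strategy is to translate ``$\sigma(x^{2n})$ is divisible only by Mersenne primes'' into a condition on the irreducible factors of $x^{2n+1}+1$: each such factor $P_i$ must satisfy $P_i+1=x^{a_i}(x+1)^{b_i}$. Using the cyclotomic factorization of $x^{2n+1}+1$ over $\F_2$ together with degree and valuation bounds, one shows that the Mersenne condition can hold for every factor only when $2n+1\in\{3,5,7\}$, i.e. $2n\in\{2,4,6\}$. For (iii) the extra hypotheses $\sigma(x^{2n})=PQ$ and $\overline{P}=\sigma(x^{2m})$ combine the $x\leftrightarrow x+1$ symmetry with the two-factor constraint to pin down $n=4$ (the case $\sigma(x^8)=M_1(x^6+x^3+1)$), $m=1$, and $Q=P(x^3)$. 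I expect this last matching --- controlling all cofactors and ruling out every other $n$ --- to be the delicate step, and it is exactly the content imported from \cite{Canaday} (Theorem 8 and its corollary).
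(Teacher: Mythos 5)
Your proposal is correct in substance, and it takes a genuinely different route from the paper: the paper proves nothing here, obtaining Lemma~\ref{complete2} entirely by citation from \cite{Gall-Rahav13} (Lemma 2.6) and \cite{Canaday} (Lemmas 4, 5, 6, Theorem 8 and the Corollary on page 728), whereas you reprove most of the package from scratch. Your direct arguments check out: the identity $\sigma(P^{2n})'=P'\,\sigma(P^{n-1})^2$ in characteristic $2$, together with $\sigma(P^{2n})\equiv P^{2n} \pmod{\sigma(P^{n-1})}$ and the observation that for Mersenne $P$ the derivative $P'$ reduces (by the parities of $a,b$, using $\gcd(a,b)=1$) to a product of powers of $x$ and $x+1$, does yield (i); the root-pairing and multiplicity argument for (ii) correctly reduces $P=P^*$ to $(x+1)^b(x^a+1)=x^{a+b}+1$, then to $a=1$ and $b+1=2^t$, i.e.\ $P=1+x(x+1)^{2^t-1}=\sigma(x^{2^t})$ --- though the final step, that this polynomial is irreducible only for $t\in\{1,2\}$, is itself Canaday's Lemma 2 (cited elsewhere in this paper), so your treatment of (ii) has a residual dependence on the literature; the reduction of (vi) to $(x+1)^{h+2}=x^{h+2}+1$, hence $h+2=2^n$, is exact; and (v) follows from (iv) via $\sigma(x^6)=M_2M_3$ as you say. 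For (iii) and (iv) you defer to exactly the Canaday results the paper cites, which is the same position the paper itself takes, so no gap arises there relative to the paper. What your route buys is a self-contained verification of the elementary parts the paper leaves implicit, at the cost of length.

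There is one genuine, though repairable, gap: part (vii) is stated for an \emph{arbitrary} odd irreducible $P$, not only for Mersenne $P$, so deducing it from (i) proves less than claimed. The repair stays inside your own toolkit. For odd irreducible $P$ and $n\ge 1$, suppose $\sigma(P^{2n})=Q^m$. If $m\ge 2$ is even, then $\sigma(P^{2n})$ is a square, so $P'\,\sigma(P^{n-1})^2=\sigma(P^{2n})'=0$, forcing $P'=0$, i.e.\ $P$ a square, contradicting irreducibility. If $m\ge 3$ is odd, then $Q^{m-1}$ divides $\sigma(P^{2n})'=P'\,\sigma(P^{n-1})^2$; since $\gcd(Q,\sigma(P^{n-1}))=1$ by your congruence argument, $Q^{m-1}$ divides $P'$, so $(m-1)\deg Q\le \deg P-1<\deg P$, while $m\deg Q=2n\deg P$ gives $\deg Q=2n\deg P/m$; combining, $2n(m-1)<m$, impossible for $n\ge 1$, $m\ge 3$. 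Hence $m\in\{0,1\}$ in the stated generality.
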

\newpage~\\
We shall use the following lemma to state Lemmas \ref{canaday-b-u-p-3} and \ref{canaday-b-u-p-4}.
\begin{lemma} [\cite{Gall-Rahav-up-allmersenne}, Lemma 2.4 and (the proof of) Lemma 2.5] \label{recall-mersenne}~\\ 
\begin{itemize}
\item[\rm{(i)}] Let $m \in \N^*$ and $M \in {\mathcal{M}}$ be such that $\sigma(M^{2m})$ has only Mersenne
primes as odd divisors,
then $2m =2$, $M \in \{M_2, M_3\}$ and $\sigma(M^{2m}) \in \{M_1M_4, M_1M_5\}$.
\item[\rm{(ii)}] Let $m \in \N^*$ and $M \in {\mathcal{M}}$ be such that $\sigma(M^{2m+1})$ has only Mersenne
primes as odd divisors,
then $2m+1 = 3 \cdot 2^{\alpha}-1$, for some $\alpha \in \N^*$ and $M \in \{M_2, M_3\}$.
All odd divisors of $\sigma(M^{2m+1})$ lie in $\{M_1, M_4, M_5\}$.
\end{itemize}
\end{lemma}

\begin{lemma}\label{gcdunitary}
Let $T$ be an irreducible polynomial over $\F_2$ and $k,l\in \N^*$.
Then, $\text{${\gcd}_u(T^k,T^l) = 1 \ ($resp. $T^k)$ if $k \not= l \ ($resp. $k=l)$.}$ In particular,
$$\text{${\gcd}_u(T^k,T^{2n-k}) = 1$ for $k \not=n$ and }
\text{${\gcd}_u(T^k,T^{2n+1-k}) = 1$ for  $k \leq 2n+1$.}$$
\end{lemma}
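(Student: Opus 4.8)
The plan is to reduce everything to the simple observation that a power $T^k$ of an irreducible polynomial has only two unitary divisors, namely $1$ and $T^k$ itself. First I would recall that, since $T$ is irreducible over $\F_2$, every divisor of $T^k$ has the form $T^i$ with $0 \le i \le k$. Such a divisor $T^i$ is unitary precisely when $\gcd(T^i, T^{k-i}) = 1$; but $\gcd(T^i, T^{k-i}) = T^{\min(i,\,k-i)}$, which equals $1$ if and only if $\min(i, k-i) = 0$, that is $i \in \{0, k\}$. Hence the set of unitary divisors of $T^k$ is exactly $\{1, T^k\}$, and likewise the set of unitary divisors of $T^l$ is $\{1, T^l\}$.

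With this characterization in hand, computing $\gcd_u(T^k, T^l)$ is immediate, since a common unitary divisor must lie in $\{1, T^k\} \cap \{1, T^l\}$. If $k \ne l$ (with $k, l \ge 1$), then $T^k \ne T^l$ while neither equals $1$, so this intersection reduces to $\{1\}$ and $\gcd_u(T^k, T^l) = 1$. If $k = l$, the intersection is $\{1, T^k\}$, whose greatest element is $T^k$, giving $\gcd_u(T^k, T^k) = T^k$. This settles the main assertion.

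For the two special cases I would simply track parity. Taking $l = 2n - k$, the equality $k = l$ holds exactly when $2k = 2n$, i.e. $k = n$; so for $k \ne n$ the previous paragraph yields $\gcd_u(T^k, T^{2n-k}) = 1$, the boundary values $k = 0$ and $k = 2n$ (where one argument degenerates to the constant $1$) being trivial. Taking instead $l = 2n+1 - k$, the equality $k = l$ would force $2k = 2n+1$, which is impossible because the right-hand side is odd; hence $k \ne l$ for every $k$ with $0 \le k \le 2n+1$, and $\gcd_u(T^k, T^{2n+1-k}) = 1$ throughout.

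I do not anticipate any genuine obstacle: the entire content is the characterization of the unitary divisors of a prime power, after which both the general claim and its two corollaries follow by inspection and an elementary parity argument. The only point deserving a word of care is the treatment of the degenerate exponents in the \emph{in particular} clause, where one of the two arguments becomes the constant $1$; there $1$ is trivially the unique, and hence greatest, common unitary divisor.
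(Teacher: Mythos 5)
Your proof is correct, and it is exactly the standard argument the paper has in mind: this lemma is among those the authors declare ``obvious or (well) known'' and state without proof, the implicit content being precisely your observation that the unitary divisors of $T^k$ are $\{1, T^k\}$. Your careful handling of the degenerate exponents $k \in \{0, 2n\}$ and the parity argument for $2n+1$ fills in everything the paper leaves unsaid.
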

\begin{corollary}\label{aboutsigmastar2}
Let $T \in \F_2[x]$ be irreducible and $n \in \N^*$. Then
\begin{itemize}
\item[\rm{(i)}] $\sigma^{**}(T^{2n}) = (1+T)\sigma(T^n) \sigma(T^{n-1}), \ \sigma^{**}(T^{2n+1}) = (1+T) (\sigma(T^n))^2$.
\item[\rm{(ii)}]For any $c \in \N^*$, $\sigma^{**}(T^{c})$ is divisible by $1+T$, but not by $T$.
\end{itemize}
\end{corollary}
\begin{proof} (ii) follows from (i). For (i),
 we get by Lemma \ref{gcdunitary}:
$$\sigma^{**}(T^{2n}) = 1+T+\cdots+T^{n-1} + T^{n+1}+\cdots+T^{2n}.$$
So, $\sigma^{**}(T^{2n}) = (1+T^{n+1})\sigma(T^{n-1}) = (1+T)\sigma(T^n) \sigma(T^{n-1})$.
Moreover, $\sigma^{**}(T^{2n+1}) = \sigma(T^{2n+1}) = (1+T)(1+T+\cdots + T^{n-1})^2$.
\end{proof}

\begin{corollary}\label{nombreminimal}
If $A$ is a nonconstant b.u.p.
polynomial over $\F_2$, then $x(x+1)$ divides $A$, so that $A$ is even and $\omega(A) \geq 2$.
\end{corollary}
\begin{proof} Let $c \in \N^*$ and let $P$ be irreducible such that $P^c \| A$. If $P$ is odd, then
$x(x+1)$ divides $1+P$ which in turn, divides $\sigma^{**}(P^c)$. Hence, it divides $\sigma^{**}(A) = A$.
If $P = x$, then as above, $1+P=1+x$ divides $\sigma^{**}(A) = A$. So, $x(x+1)$ divides $A$. We apply a similar argument if $P=x+1$.
\end{proof}
\begin{corollary}\label{expressigmastar} Let $T \in \F_2[x]$ be irreducible and $r, \alpha, u \in \N^*$, with $u$ odd. Then
\begin{itemize}
\item[\rm{(i)}]$\sigma^{**}(T^{4r})=(1+T)^{2^{\alpha}} \sigma(T^{2r}) (\sigma(T^{u-1}))^{2^{\alpha}}$, $\gcd(\sigma(T^{2r}), \sigma(T^{u-1}))=1,$
if $2r = 2^{\alpha}u$.
\item[\rm{(ii)}]
$\sigma^{**}(T^{4r+2})=(1+T)^{2^{\alpha}} \sigma(T^{2r}) (\sigma(T^{u-1}))^{2^{\alpha}}$,\ $\gcd(\sigma(T^{2r}), \sigma(T^{u-1}))=1,$
if $2r+1 = 2^{\alpha}u-1$.
\item[\rm{(iii)}]
$\sigma^{**}(T^{2^{\alpha}u-1})=(1+T)^{2^{\alpha}-1} \cdot (\sigma(T^{u-1}))^{2^{\alpha}}$.
\end{itemize}
\end{corollary}
\begin{proof}
(i): We apply Corollary \ref{aboutsigmastar2} with $2n=4r$. One has:
$$\sigma(T^{2r-1}) = \sigma(T^{2^{\alpha}u-1}) = (1+T)^{2^{\alpha}-1} (\sigma(T^{u-1}))^{2^{\alpha}}.$$
Now, $\gcd(\sigma(T^{2r}), \sigma(T^{u-1}))=1$ since $\gcd(\sigma(T^{2r}), \sigma(T^{2r-1}))=1$ (Lemma \ref{coprime}) and $\sigma(T^{u-1})$ divides $\sigma(T^{2r-1})$.\\
(ii) and (iii): similar arguments with $2n= 4r+2$ (resp. $2n+1=2^{\alpha}u-1$).
\end{proof}
We shall use Table \ref{table2} (obtained from Corollary \ref{expressigmastar}, for some $\sigma^{**}(T^c)$).
\begin{table}[h]
\caption{Some formulas for $T \in \{x,x+1\} \cup {\mathcal{M}}$}~\\
\centering
\begin{tabular}{|l|}
\hline
\\
$\sigma^{**}(T^{2}) = (1+T)^2,\ \sigma^{**}(T^{4}) = (1+T)^2 \sigma(T^2), \
\sigma^{**}(T^{6}) = (1+T)^4 \sigma(T^2)$\\
\\
$\sigma^{**}(T^{8}) = (1+T)^4 \sigma(T^4),\ \sigma^{**}(T^{10}) = (1+T)^2 (\sigma(T^2))^2 \sigma(T^4)$\\
\\
$\sigma^{**}(T^{12}) = (1+T)^2 (\sigma(T^2))^2 \sigma(T^6),\
\sigma^{**}(T^{14}) = (1+T)^8 \sigma(T^6)$\\
\\
$\sigma^{**}(T^{2^{\alpha}-1}) = (1+T)^{2^{\alpha}-1}, \text{ for $\alpha \in \N^*$}$.\\
\\
\hline
\end{tabular}
\label{table2}
\end{table}

\begin{corollary}\label{splitcriteria}
Let $T \in \{x,x+1\} \cup {\mathcal{M}}$ and $c \in \N^*$. Then, $\sigma^{**}(T^{c})$ splits over $\F_2$ if and only if $(c=2$ or $c=2^{\alpha}-1$, for some $\alpha \in \N^*).$
\end{corollary}

\begin{corollary}\label{M2M3}
\begin{itemize}
\item[\rm{(i)}] For any $j \leq 5$, neither $M_2$ nor $M_3$ divides $\sigma^{**}(M_j^{h_j})$.
\item[\rm{(ii)}] $\sigma^{**}({M_2}^4) = x^2(x+1)^4 M_1M_5$ and $\sigma^{**}({M_3}^4) = x^4(x+1)^2 M_1M_4$.
\item[\rm{(iii)}] If $j \in \{1,4,5\}$ and $r \geq 2$, then $\sigma^{**}({M_j}^{2r})$ has a non Mersenne prime divisor.
\end{itemize}
\end{corollary}
\begin{proof} For (i) and (iii), see Lemma \ref{recall-mersenne}-i). For (ii), see Table \ref{table2}.
\end{proof}
\begin{lemma}\label{M2M3divisors}
If $M_2$ divides $\sigma^{**}(x^{a})$, then $a \in \{12,14,7\cdot 2^n-1: n \in \N^*\}$.
In this case, $M_3$ also divides $\sigma^{**}(x^{a})$.
\end{lemma}
\begin{proof}
We refer to Lemma \ref{complete2}-iv) and Corollary \ref{expressigmastar}.

If $a$ is even, then put $a = 4r = 2(2^{\alpha}u)$ or $a=4r+2=2(2^{\alpha}u-1)$, for some $r, \alpha, u \geq 1$, $u$ odd. We see that $M_2$ must divide $\sigma(x^{2r}) \sigma(x^{u-1})$. So $2r = 6$, $u-1 \not= 6$ since $\gcd(\sigma(x^{2r}), \sigma(x^{u-1})) = 1$. Hence, $M_3$ also divides $\sigma(x^{2r})$ and $a \in \{12,14\}$.

If $a=2^{n}u-1$ with $n,u \geq 1$, $u$ odd, then $M_2$ must divide $\sigma(x^{u-1})$. So, $u =7$ and $M_3$ also divides $\sigma(x^{u-1})$.
\end{proof}

Lemmas \ref{complete2} and \ref{recall-mersenne} imply the following four ones.
\begin{lemma}\label{canaday-b-u-p-1}
Let $m \in \N^*$ and $T \in \{x,x+1\}$ be such that the only odd prime divisors of $\sigma^{**}(T^{2m})$ are Mersenne primes, then $2m \in \{4,6,8,10,12,14\}$.
In this case, all its divisors lie in ${\mathcal{M}}$.
\end{lemma}

\begin{proof} It suffices to treat the case $T=x$ (for $T=x+1$, consider $\overline{T} = x$).

Case 1: $2m=4r$, with $r \geq 1$ and $2r = 2^{\alpha}u$, $u$ odd.
Corollary \ref{expressigmastar} gives: $\sigma^{**}(x^{2m}) = (1+x)^{2^{\alpha}} \cdot \sigma(x^{2r}) \cdot (\sigma(x^{u-1}))^{2^{\alpha}}$,
where $\sigma(x^{2r}) \sigma(x^{u-1})$ factors in $\{x,x+1\} \cup {\mathcal{M}}$.
Therefore, by Lemma \ref{complete2}-iv), $2r \in \{2,4,6\}$ and $u \in \{1,3,5,7\}$. So, $2m \in \{4,8,12\}$.

Case 2:  $2m=4r+2$, with $r \geq 1$ and $2r+1 = 2^{\alpha}u-1$, $u$ odd.
One has: $\sigma^{**}(x^{2m})=(1+x)^{2^{\alpha}} \cdot \sigma(x^{2r}) \cdot (\sigma(x^{u-1}))^{2^{\alpha}}$. As above, $2r \in \{2,4,6\}$ and $u \in \{1,3,5,7\}$, $2r = 2^{\alpha} u-2$. So, $2m \in \{6,10,14\}$.
It remains to remark that $\sigma(x^2)=M_1, \ \sigma(x^4)=M_4$ and $\sigma(x^6)=M_2 M_3$.
\end{proof}

\begin{lemma}\label{canaday-b-u-p-2}
Let $m \in \N^*$ and $T \in \{x,x+1\}$ be such that $\sigma^{**}(T^{2m+1})$ has only Mersenne primes as odd divisors, then
$2m+1 = 2^{\alpha} u -1$, for some $\alpha \in \N^*$ and $u \in \{3,5,7\}$.
In this case, all its odd divisors lie in ${\mathcal{M}}$.
\end{lemma}

\begin{proof}
Assume that $T = x$. One has: $\sigma^{**}(x^{2m+1}) = \sigma(x^{2m+1}) = (1+x)^{2^{\alpha}-1} (\sigma(x^{u-1}))^{2^{\alpha}}$. Therefore,
any odd divisor of $\sigma(x^{u-1})$ is a Mersenne prime. Thus, $u-1 \in \{2,4,6\}$ by Lemma \ref{complete2}-(iv).
\end{proof}

\begin{lemma}\label{canaday-b-u-p-3}
Let $M \in {\mathcal{M}}$ and $m \in \N^*$ be such that $\sigma^{**}(M^{2m})$ has only Mersenne primes as odd divisors,
then $2m \in \{4,6\}$ and $M \in \{M_2, M_3\}$.
In this case, all its divisors lie in $\{M_1, M_4, M_5\}$.
\end{lemma}
\begin{proof}
Case 1: $2m=4r$, with $r \geq 1$ and $2r = 2^{\alpha}u$, $u$ odd.
One has: $\sigma^{**}(M^{2m}) = (1+M)^{2^{\alpha}} \cdot \sigma(M^{2r}) \cdot (\sigma(M^{u-1}))^{2^{\alpha}}$.
So, $\sigma(M^{2r})$ and $\sigma(M^{u-1})$ (if $u \not= 1$) are only divisible by Mersenne primes and thus
$M \in \{M_2, M_3\}$ and $(2r=2$ or $u-1=2)$ (Lemma \ref{recall-mersenne}-i)). We must not have: $2r=u-1$ because
$\gcd(\sigma(M^{2r}), \sigma(M^{u-1}))=1$ (Corollary \ref{expressigmastar}). It follows that $2r = 2$ and $u=1$.
So, $\sigma(M^{2r}) = \sigma(M^2) \in \{M_1M_4, M_1M_5\}$.

Case 2:  $2m=4r+2$, with $r \geq 1$ and $2r+1 = 2^{\alpha}u-1$, $u$ odd.
One has: $\sigma^{**}(M^{2m})=\sigma(M^{2m})=(1+M)^{2^{\alpha}} \cdot \sigma(M^{2r}) \cdot (\sigma(M^{u-1}))^{2^{\alpha}}$.
As above, $M \in \{M_2,M_3\}$, $2r = 2$,  $u=1$ and $\sigma(M^{2r}) \in \{M_1M_4, M_1M_5\}$.
\end{proof}

\begin{lemma}\label{canaday-b-u-p-4}
Let $M \in {\mathcal{M}}$ and $m \in \N^*$ be such that $\sigma^{**}(M^{2m+1})$ has only Mersenne primes as odd divisors, then $2m+1 \in
\{3 \cdot 2^{\alpha}-1: \alpha \in \N^* \}$ and $M \in \{M_2, M_3\}$.
In this case, all its odd divisors lie in $\{M_1, M_4, M_5\}$.
\end{lemma}
\begin{proof}
Apply Lemma \ref{recall-mersenne}-(ii) since $\sigma^{**}(M^{2m+1}) = \sigma(M^{2m+1})$.
\end{proof}

\section{Proof of Theorem \ref{casemersenne}}\label{casmersenne}

Direct computations prove that our conditions are sufficient. For the necessities, we shall apply
Lemmas \ref{canaday-b-u-p-1}, \ref{canaday-b-u-p-2}, \ref{canaday-b-u-p-3} and \ref{canaday-b-u-p-4}. We fix:
\begin{itemize}
\item[ ] $\text{$\displaystyle{A= x^a(x+1)^b \prod_{i \in I}  P_i^{h_i}= A_1 A_2}$, where $a,b, h_i \in \N$,
$P_i$ is a Mersenne prime,}$
\item[ ] $\text{$\displaystyle{A_1 = x^a(x+1)^b \prod_{i=1}^5 M_i^{h_i}}$
and $\displaystyle{A_2 = \prod_{P_i \not\in {\mathcal{M}}} {P_i}^{h_i}}$.}$
\end{itemize}
We suppose that $A$ is {\it{i.b.u.p.}} so that $A_1 A_2 = A = \sigma^{**}(A) = \sigma^{**}(A_1) \sigma^{**}(A_2)$.

\subsection{First reduction}\label{firstreduction}

\begin{lemma}\label{gcdMjsigmA1}
\begin{itemize}
\item[\rm{(i)}] $\text{For any $P_j \not\in {\mathcal{M}}$, $\gcd(P_j^{h_j},\sigma^{**}(A_1)) = 1$ and $h_j = 0$}.$
\item[\rm{(ii)}] The polynomial $A_2$ equals $1$ and thus $A=A_1$.
\end{itemize}
\end{lemma}

\begin{proof}
By Lemmas \ref{canaday-b-u-p-1} and \ref{canaday-b-u-p-3}, any odd irreducible divisor of $\sigma^{**}(x^a)$
(resp. of $\sigma^{**}((x+1)^b)$, of $\sigma^{**}(M_i^{h_i})$, with $M_i \in {\mathcal{M}}$) must belong
to ${\mathcal{M}}$. Thus, for all $P_j \not\in {\mathcal{M}}$ and $M_i \in {\mathcal{M}}$, $P_j$ divides neither
$\sigma^{**}(x^a)$,
$\sigma^{**}((x+1)^b)$ nor $\sigma^{**}(M_i^{h_i})$. Hence, $\gcd(P_j^{h_j}, \sigma^{**}(A_1)) = 1$.
Moreover, $P_j^{h_j}$ divides $\sigma^{**}(A_2)$ because it divides
$A = \sigma^{**}(A)=\sigma^{**}(A_1) \sigma^{**}(A_2)$ and $\gcd(P_j^{h_j}, \sigma^{**}(A_1)) = 1$.
Hence, $A_2$ divides $\sigma^{**}(A_2)$.
So, by  Lemma \ref{sigmastardegree}, $A_2$ is b.u.p. and it is equal to $1$, $A$ being indecomposable.
\end{proof}

\begin{lemma}\label{divisorsigmA1}
If $Q$ is a Mersenne prime divisor of $\sigma^{**}(A_1)$, then $Q \in {\mathcal{M}}$.
\end{lemma}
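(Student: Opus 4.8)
The plan is to reduce the statement to the four structural lemmas (Lemmas \ref{canaday-b-u_p-1}--\ref{canaday-b-u_p-4}) by exploiting the multiplicativity of $\sigma^{**}$. Writing $A_1 = x^a(x+1)^b {P_1}^{h_1}\cdots{P_r}^{h_r}$ with each $P_j$ a Mersenne prime, multiplicativity gives
\[
\sigma^{**}(A_1) = \sigma^{**}(x^a)\,\sigma^{**}((x+1)^b)\,\prod_{j=1}^r \sigma^{**}({P_j}^{h_j}).
\]
Since $Q$ is irreducible and divides the left-hand side, it divides one of the factors on the right. Each such factor divides $\sigma^{**}(A_1)$, which by hypothesis has only Mersenne primes as odd divisors; hence every factor $\sigma^{**}(\cdot)$ has only Mersenne primes as odd divisors, which is exactly the hypothesis required to invoke the structural lemmas.

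First I would dispose of the two ``base'' cases. If $Q \mid \sigma^{**}(x^a)$, then Lemma \ref{canaday-b-u_p-1} (when $a$ is even) or Lemma \ref{canaday-b-u_p-2} (when $a$ is odd) forces the odd divisors of $\sigma^{**}(x^a)$ into $\mathcal{M}$, so $Q \in \mathcal{M}$. If $Q \mid \sigma^{**}((x+1)^b)$, I would apply Lemma \ref{translation}: since $\sigma^{**}$ commutes with the substitution $x \mapsto x+1$, one has $\sigma^{**}((x+1)^b) = \overline{\sigma^{**}(x^b)}$, so $\overline{Q}$ divides $\sigma^{**}(x^b)$ and thus $\overline{Q} \in \mathcal{M}$. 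Because $\mathcal{M}$ is stable under $T \mapsto \overline{T}$ (indeed $\overline{M_1}=M_1$, $\overline{M_2}=M_3$, $\overline{M_4}=M_5$), this yields $Q \in \mathcal{M}$ as well. The explicit formulas in (\ref{formulas}) make these reductions immediate.

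The remaining case is $Q \mid \sigma^{**}({P_j}^{h_j})$ for some $j$, and this is where I expect the main obstacle to lie: Lemmas \ref{canaday-b-u_p-3} and \ref{canaday-b-u_p-4} are stated only for $M \in \mathcal{M}$, so to conclude $Q \in \{M_1,M_4,M_5\} \subseteq \mathcal{M}$ I would first need $P_j \in \mathcal{M}$ --- precisely the kind of statement being proved, creating a potential circularity. To break it I would argue by induction on $\deg Q$, or equivalently pick a Mersenne divisor $Q$ of $\sigma^{**}(A_1)$ of minimal degree lying outside $\mathcal{M}$ and seek a contradiction. The base cases above force $Q \in \mathcal{M}$ unconditionally; in the prime-power case, if $P_j \in \mathcal{M}$ then Lemma \ref{canaday-b-u_p-3} or \ref{canaday-b-u_p-4} finishes the argument, while if $P_j \notin \mathcal{M}$ then $P_j$ is itself a Mersenne divisor of $\sigma^{**}(A_1)$ of degree no smaller than $\deg Q$, and the descent applied to $P_j$ must again trace it back through $\sigma^{**}(x^a)$ or $\sigma^{**}((x+1)^b)$ into $\mathcal{M}$. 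The delicate point is verifying that this descent is well-founded --- that $\mathcal{M}$ is \emph{forward-closed} under $\sigma^{**}$ (Lemmas \ref{canaday-b-u_p-3}--\ref{canaday-b-u_p-4} send every $M \in \mathcal{M}$ into $\{M_1,M_4,M_5\}$) while the only ``sources'' of Mersenne primes are $x$ and $x+1$, both landing in $\mathcal{M}$ --- so that no chain of Mersenne divisors can cycle among primes outside $\mathcal{M}$. Once this is established, every Mersenne divisor $Q$ of $\sigma^{**}(A_1)$ is forced into $\mathcal{M}$, completing the proof.
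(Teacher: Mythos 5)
Your first half---decomposing $\sigma^{**}(A_1)$ by multiplicativity, sending the cases $Q \mid \sigma^{**}(x^a)$ and $Q \mid \sigma^{**}((x+1)^b)$ to Lemmas \ref{canaday-b-u_p-1} and \ref{canaday-b-u_p-2} (via the bar symmetry and the stability of ${\mathcal{M}}$ under $T \mapsto \overline{T}$), and the case $Q \mid \sigma^{**}(M_i^{h_i})$ to Lemmas \ref{canaday-b-u_p-3} and \ref{canaday-b-u_p-4}---is exactly the paper's proof. The difficulty on which you spend the remainder of the argument does not exist: in the paper $A_1$ is \emph{defined} as $x^a(x+1)^b\prod_{i=1}^5 M_i^{h_i}$, so every odd prime-power factor of $A_1$ has base in ${\mathcal{M}}$ by construction; the Mersenne primes $P_j \notin {\mathcal{M}}$ are all collected in $A_2$, and it is Lemma \ref{gcdMjsigmA1} (which \emph{uses} the present lemma) that afterwards shows $A_2=1$. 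Hence the case ``$P_j \notin {\mathcal{M}}$'' is vacuous, there is no circularity, and no induction or descent is needed.

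Beyond being unnecessary, the descent you sketch would not close. The claim that a Mersenne prime $P_j \notin {\mathcal{M}}$ dividing $A_1$ is ``itself a Mersenne divisor of $\sigma^{**}(A_1)$'' is unsupported: it would require $A_1 = \sigma^{**}(A_1)$, which is not a hypothesis here. The degree comparison $\deg P_j \ge \deg Q$ is likewise unestablished, and degrees are not monotone under $\sigma^{**}$ anyway (by Corollary \ref{M2M3}, $\sigma^{**}({M_2}^4)=x^2(x+1)^4M_1M_5$ has an odd divisor of degree $2<\deg M_2$ and one of degree $4>\deg M_2$), so well-foundedness of the descent is doubtful. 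Finally, the paper provides no analogue of Lemmas \ref{canaday-b-u_p-3}--\ref{canaday-b-u_p-4} for Mersenne primes outside ${\mathcal{M}}$, so the inductive step could not be carried out with the available tools. One minor point you share with the paper: the lemma's literal hypothesis is only that $Q$ is a Mersenne divisor of $\sigma^{**}(A_1)$, whereas the argument needs each factor $\sigma^{**}(x^a)$, $\sigma^{**}((x+1)^b)$, $\sigma^{**}(M_i^{h_i})$ to have only Mersenne odd divisors; this is supplied by the ambient assumption of Section \ref{casmersenne} that $A$ is i.b.u.p with only Mersenne odd prime divisors, since then $\sigma^{**}(A_1)$ divides $A$.
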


\begin{proof} We apply Lemmas \ref{canaday-b-u-p-1} and \ref{canaday-b-u-p-3}.
If $Q$ divides $\sigma^{**}(x^a) \sigma^{**}((x+1)^b)$,
then $Q \in {\mathcal{M}}$. If $Q$ divides $\sigma^{**}(P_i^{h_i})$ with $P_i \in {\mathcal{M}}$,
then $P_i \in \{M_2, M_3\}$ and $Q \in \{M_1, M_4,M_5\}$.
\end{proof}

\begin{lemma}\label{hjvalues}
Suppose that $\sigma^{**}(M_j^{h_j})$ factors in $\{x, x+1\} \cup {\mathcal{M}}$, for some $j \leq 5$. Then
\begin{itemize}
\item[\rm{(i)}] $h_j \in \{2,2^n-1: n \in \N^*\}$ if $j \not\in  \{2, 3\}$,
\item[\rm{(ii)}] $h_j \in \{2,4,6\}$ or it is of the form $2^n u -1$, where $n\geq 1$ and $u \in\{1,3\}$,
if $j \in \{2,3 \}$.
\end{itemize}
\end{lemma}

\begin{proof}
These results are obtained from Corollary \ref{splitcriteria}, Lemma \ref{canaday-b-u-p-3} and Lemma \ref{canaday-b-u-p-4}.
\end{proof}

\begin{corollary}\label{caseperfect}
If $A_1$ is b.u.p., then $h_3=h_2$, $h_2 \in \{0,2,4,6,2^n-1, 3 \cdot 2^n-1: n \in \N^*\}$ and
$h_i \in \{0,2,2^n-1: n \in \N^*\}$, for  $i \in \{1,4,5\}$.
\end{corollary}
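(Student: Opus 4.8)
The plan is to exploit the identity $\sigma^{**}(A_1)=A_1$ together with the multiplicativity of $\sigma^{**}$. Writing $A_1=x^a(x+1)^b\prod_{i=1}^5 M_i^{h_i}$ as a product of pairwise coprime factors, multiplicativity gives $\sigma^{**}(A_1)=\sigma^{**}(x^a)\,\sigma^{**}((x+1)^b)\prod_{i=1}^5\sigma^{**}(M_i^{h_i})$, so each factor $\sigma^{**}(M_i^{h_i})$ divides $A_1$. Since every odd irreducible divisor of $A_1$ lies in $\mathcal{M}$ by hypothesis, each $\sigma^{**}(M_i^{h_i})$ has only Mersenne primes as odd divisors, which is precisely the hypothesis needed to invoke the classification lemmas.

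First I would bound each individual exponent $h_i$. By Corollary~\ref{splitcriteria}(ii), $\sigma^{**}(M_i^{h_i})$ splits over $\F_2$ exactly when $h_i=2$ or $h_i=2^n-1$; in that case it has no odd divisor and no constraint arises. If $h_i\neq 0$ and $\sigma^{**}(M_i^{h_i})$ does not split, it carries a genuine odd (hence Mersenne) divisor, so Lemma~\ref{canaday-b-u_p-3} applies when $h_i$ is even and Lemma~\ref{canaday-b-u_p-4} when $h_i$ is odd. Both conclusions force $M_i\in\{M_2,M_3\}$, with $h_i\in\{4,6\}$ in the even case and $h_i=3\cdot 2^n-1$ in the odd case. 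Consequently, for $i\in\{1,4,5\}$ the non-split possibilities are excluded, leaving $h_i\in\{0,2,2^n-1\}$; for $i\in\{2,3\}$ one additionally admits $h_i\in\{4,6,3\cdot 2^n-1\}$, which yields the stated list for $h_2$.

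The remaining and most delicate point is the equality $h_2=h_3$. The key is to locate where $M_2$ and $M_3$ can enter $\sigma^{**}(A_1)=A_1$. By Corollary~\ref{M2M3}(i), neither $M_2$ nor $M_3$ divides any $\sigma^{**}(M_j^{h_j})$, so their multiplicities in $A_1$ are produced solely by $\sigma^{**}(x^a)$ and $\sigma^{**}((x+1)^b)$; that is, $h_2=v_{M_2}(\sigma^{**}(x^a))+v_{M_2}(\sigma^{**}((x+1)^b))$, and similarly for $h_3$ with $M_3$. By Corollary~\ref{aboutsigmastar2}(i), each $\sigma^{**}(x^c)$ is, up to a power of $1+x$, a product of polynomials $\sigma(x^j)$, and every $\sigma(x^j)$ is self-reciprocal. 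Since $M_3=M_2^{*}$ and reciprocation preserves the multiplicity of an irreducible factor in a self-reciprocal polynomial, one gets $v_{M_2}(\sigma^{**}(x^c))=v_{M_3}(\sigma^{**}(x^c))$ for every $c$. Applying the translation $x\mapsto x+1$, which swaps $M_2$ and $M_3$ because $M_3=\overline{M_2}$, transfers this equality to the $(x+1)^b$-factor as well, and summing the two contributions gives $h_2=h_3$.

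I expect the self-reciprocity argument in the last step to be the main obstacle: the individual exponent bounds are near-immediate from the preliminary lemmas, whereas pinning down $h_2=h_3$ requires the observation that $M_2$ and $M_3$ are reciprocals and that the $\sigma^{**}(x^c)$ factor is a product of palindromes, so the two exponents are forced to coincide rather than merely to be simultaneously zero or nonzero.
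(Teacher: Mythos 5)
Your proof is correct and, for the exponent bounds, follows the paper's route exactly: each $\sigma^{**}(M_i^{h_i})$ divides $A_1=\sigma^{**}(A_1)$, hence factors in $\{x,x+1\}\cup{\mathcal{M}}$, and Corollary \ref{splitcriteria} together with Lemmas \ref{canaday-b-u_p-3} and \ref{canaday-b-u_p-4} give the stated lists; your explicit separation of the split case $h_i\in\{2,2^n-1\}$ is a sensible precaution, since those lemmas are only informative when $\sigma^{**}(M^{h})$ actually has an odd divisor. Where you genuinely diverge is the step $h_2=h_3$. The paper also reduces, via Corollary \ref{M2M3} i), to the factor $V=\sigma^{**}(x^a)\sigma^{**}((x+1)^b)$, but then leans on Corollary \ref{M2M3} iv): $M_2$ can only enter $\sigma^{**}(x^a)$ through the block $\sigma(x^6)=M_2M_3$ (forcing $a\in\{12,14,7\cdot 2^n-1\}$), so $M_2$ and $M_3$ automatically occur to the \emph{same} power in each factor of $V$ --- a point the published proof leaves implicit when it jumps from ``$M_3$ also divides $V$'' to ``$h_2=h_3$''. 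You instead argue structurally: $\sigma^{**}(x^c)$ is a product of the palindromes $\sigma(x^j)$ and powers of $1+x$, hence is self-reciprocal with nonzero constant term (Corollary \ref{aboutsigmastar2} ii)), so the multiplicities of $M_2$ and of $M_3=M_2^{*}$ in it coincide, and conjugating by $x\mapsto x+1$ (which swaps $M_2$ and $M_3$) handles the $(x+1)^b$ factor. Both arguments are sound; the paper's is a finite verification hidden inside Corollary \ref{M2M3} iv), while yours is more conceptual, actually supplies the multiplicity equality the paper glosses over, and would survive any enlargement of the ambient list of Mersenne primes.
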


\begin{proof}
If $M_2$ (resp. $M_3$) divides $\sigma^{**}(A_1)$, then it divides $$V=\sigma^{**}(x^{a}) \sigma^{**}((x+1)^{b}).$$
Therefore, $M_3$ (resp. $M_2$) also divides $V$ and $\sigma^{**}(A_1)$. Hence, $h_2=h_3$.
Suppose that $h_j \geq 1$. The polynomial $\sigma^{**}(M_j^{h_j})$ must factor in $\{x, x+1\} \cup {\mathcal{M}}$.
Hence, by Lemma \ref{hjvalues}, $h_j \in \{2,2^n-1: n \in \N^*\}$, if $j \not\in  \{2, 3\}$. By the same lemma, for $j \in \{2,3 \}$, one has:
$h_j \in \{2,4,6\}$ or it is of the form $2^n u -1$, with $n\geq 1$, $u \in\{1,3\}$.
\end{proof}

In the rest of the paper, we prove the following proposition.
\begin{proposition}\label{A1-b-u-perfect}
If $A_1$ is b.u.p., then $A_1, \overline{A_1} \in \{C_1,\ldots, C_{23}\}$.
\end{proposition}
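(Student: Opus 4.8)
\textbf{Proof plan for Proposition \ref{A1-b-u-perfect}.}

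The plan is to exploit the fundamental identity $A_1 = \sigma^{**}(A_1)$ together with the multiplicativity of $\sigma^{**}$ and the constraints already assembled in Section \ref{preliminaire}. First I would write
$$A_1 = x^a(x+1)^b \prod_{i=1}^5 M_i^{h_i} = \sigma^{**}(x^a)\,\sigma^{**}((x+1)^b)\prod_{i=1}^5 \sigma^{**}(M_i^{h_i}),$$
and observe that every irreducible factor on the right is, by Lemmas \ref{canaday-b-u_p-1}--\ref{canaday-b-u_p-4} and Lemma \ref{divisorsigmA1}, forced to lie in $\{x,x+1\}\cup{\mathcal{M}}$. This is what makes the system closed: the exponents $(a,b,h_1,\dots,h_5)$ determine, and are determined by, the same finite set of primes. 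Corollary \ref{caseperfect} already restricts each $h_i$ to an explicit (still infinite, but highly structured) set, with the crucial symmetry $h_2=h_3$.

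The key reductive step is to bound $a$ and $b$. Since $A_1$ is even (Lemma \ref{nombreminimal}), both $a,b\geq 1$; and because the odd part of $A_1$ is divisible only by the five fixed Mersenne primes $M_1,\dots,M_5$, the factor $\sigma^{**}(x^a)$ must have all its odd divisors in ${\mathcal{M}}$. By Lemma \ref{canaday-b-u_p-1} this forces $a\in\{4,6,8,10,12,14\}$ whenever $a$ is even, and by Lemma \ref{canaday-b-u_p-2} forces $a=2^{\alpha}u-1$ with $u\in\{3,5,7\}$ whenever $a$ is odd; the same dichotomy applies to $b$. So I would first dispose of the \emph{a priori} unbounded odd values of $a$ and $b$: if $a=2^{\alpha}u-1$ with $\alpha$ large, then $\sigma^{**}(x^a)=(1+x)^{2^{\alpha}-1}(\sigma(x^{u-1}))^{2^{\alpha}}$ by Corollary \ref{expressigmastar}(iii) contributes a power of $(x+1)$ and of $M_1$ or $M_2M_3$ that grows with $\alpha$. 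Matching these exponents against the \emph{bounded} supply of each $M_i$ coming from $\sigma^{**}(M_j^{h_j})$ (recall from Corollary \ref{M2M3} and Lemma \ref{canaday-b-u_p-3} that only $M_2,M_3$ with $h_j\in\{4,6\}$, or the split cases $h_j\in\{2,2^n-1\}$, can feed Mersenne primes back) yields a finite set of admissible $\alpha$. Concretely, the only mechanism producing the prime $M_2$ (equivalently $M_3$) is via $a$ or $b$ of the shape $7\cdot 2^n-1$ (Corollary \ref{M2M3}(iv)) or via $\sigma^{**}(M_3^4),\sigma^{**}(M_2^4)$, and each such contribution is accompanied by a fixed amount of $x$, $x+1$, $M_1$, $M_4$, $M_5$; equating the total multiplicity of every prime on both sides of $A_1=\sigma^{**}(A_1)$ produces a finite Diophantine system in the exponents.

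Once $a$ and $b$ are confined to a finite list, the same exponent-matching pins down each $h_i$: the power of $M_1$ on the left is $h_1$, while on the right it is an explicit sum of contributions from $\sigma^{**}(x^a)$, $\sigma^{**}((x+1)^b)$ (each a power of $M_1$ when $a$ or $b$ is $3\cdot2^n-1$), and from $\sigma^{**}(M_2^{h_2}),\sigma^{**}(M_3^{h_3})$ via the formulas of Corollary \ref{M2M3}(ii); and symmetrically for $M_4,M_5$ using $\sigma(x^4)=M_4$, $\sigma((x+1)^4)=M_5$ and Lemma \ref{recall-mersenne}. The indecomposability hypothesis, via Lemma \ref{gcdMjsigmA1} and Lemma \ref{multiplicativity}, rules out any nontrivial coprime splitting, so no solution can be discarded as a spurious product. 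Collecting the finitely many surviving tuples $(a,b,h_1,\dots,h_5)$ and verifying directly that each gives $\sigma^{**}=\mathrm{id}$ — the computation the authors delegate to Maple — leaves exactly the $23$ polynomials $C_1,\dots,C_{23}$ up to the involution $A_1\mapsto\overline{A_1}$.

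The main obstacle I anticipate is the entanglement between $a$, $b$, and the pair $(h_2,h_3)$. The primes $M_2,M_3$ are the only non-split, non-self-reciprocal Mersenne primes among the $M_i$ that can simultaneously appear as divisors \emph{and} be produced by $\sigma^{**}$ of something else, so the bookkeeping for their multiplicities is genuinely coupled: a choice of $a=7\cdot2^n-1$ both consumes and generates $M_2M_3$, and the feedback loop through $\sigma^{**}(M_2^{h_2})$, $\sigma^{**}(M_3^{h_3})$ must be closed consistently. Controlling this loop — showing the exponents $n$, $m$ (from $a$, $b$) and $h_2$ cannot all grow — is the delicate heart of the argument; everything else is the self-reciprocal, split-friendly primes $M_1,M_4,M_5$, whose contributions are additive and easy to tally. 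I would isolate this coupled subsystem first and prove a finiteness bound for it in a dedicated lemma before assembling the full enumeration.
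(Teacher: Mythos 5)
Your plan follows the paper's own route essentially step for step: the same decomposition $A_1=\sigma^{**}(A_1)$, the same use of Corollary \ref{caseperfect} and Lemmas \ref{canaday-b-u_p-1}--\ref{canaday-b-u_p-4} to restrict the shapes of $a$, $b$ and the $h_i$, the same exponent-matching (the power $2^{\alpha}-1$ of $x+1$ produced by $\sigma^{**}(x^a)$ for odd $a$ must not exceed $b$, and the multiplicity of $M_2$ couples $a$, $b$ and $h_2=h_3$) leading to the finite lists of Lemmas \ref{mersenneaetb} and \ref{mersennabodd}, and the same concluding Maple enumeration. The one slip is that an even $a$ is not forced into $\{4,\dots,14\}$: the values $a\in\{0,2\}$ satisfy the hypothesis of Lemma \ref{canaday-b-u_p-1} vacuously and must be retained in the search (as the paper's computation does), even though no solution ultimately arises from them.
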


\subsection{Proof of Proposition \ref{A1-b-u-perfect}}\label{mainproof}

We write: $\displaystyle{A_1 = x^a(x+1)^b M_1^{h_1} M_2^{h_2} M_3^{h_3} M_4^{h_4} M_5^{h_5}}$.
Corollary \ref{caseperfect} implies that for any $i$, $h_i \in \{0,2,4,6,2^n-1, 3 \cdot 2^n-1: n \in \N^*\}$.

\begin{lemma}\label{aoubsup3}
For any $n,m \in \N^*$,  $a\not=2^n-1$ or $b\not= 2^m-1$.
\end{lemma}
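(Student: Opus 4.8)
The plan is to argue by contradiction: assume there exist $n,m \in \N^*$ with $a = 2^n-1$ and $b = 2^m-1$, and derive a contradiction by tracking the multiplicities of the even primes $x$ and $x+1$ in the identity $A_1 = \sigma^{**}(A_1)$. Throughout I use the standing hypotheses that $A_1 = x^a(x+1)^b\prod_{i=1}^5 M_i^{h_i}$ is b.u.p and that $\omega(A_1) = \omega(A) \geq 3$.

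First I would evaluate $\sigma^{**}$ on the even factors. Writing $a = 2^n\cdot 1 - 1$, Corollary \ref{expressigmastar} iii) (with $u=1$) gives $\sigma^{**}(x^a) = (1+x)^{2^n-1} = (x+1)^{2^n-1}$, a pure power of $x+1$; symmetrically $\sigma^{**}((x+1)^b) = x^{2^m-1}$, a pure power of $x$. Expanding $A_1 = \sigma^{**}(A_1) = \sigma^{**}(x^a)\,\sigma^{**}((x+1)^b)\prod_{i=1}^5 \sigma^{**}(M_i^{h_i})$, the factor $\sigma^{**}(x^a)$ thus contributes only to the power of $x+1$ and $\sigma^{**}((x+1)^b)$ only to the power of $x$, with the exponents $2^n-1$ and $2^m-1$ swapped relative to the left-hand side.

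The key step is the observation that for every $i$ with $h_i \geq 1$, the polynomial $1+M_i$ divides $\sigma^{**}(M_i^{h_i})$, and that $1+M_i$ is divisible by both $x$ and $x+1$. The first claim follows since each of the three cases of Corollary \ref{expressigmastar} exhibits a factor $(1+M_i)^e$ with $e \geq 1$: in case iii) the exponent is $2^\alpha-1 \geq 1$ because $\alpha \geq 1$ for odd $h_i \geq 1$, and in cases i), ii) it is $2^\alpha$ with $\alpha \geq 1$. The second claim is a direct check, since each $M_i = 1 + x^{a_i}(x+1)^{b_i}$ with $a_i,b_i \geq 1$; explicitly $1+M_1=x(x+1)$, $1+M_2=x(x+1)^2$, $1+M_3=x^2(x+1)$, $1+M_4=x(x+1)^3$, $1+M_5=x^3(x+1)$. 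Hence any $h_i \geq 1$ forces both $x$ and $x+1$ to occur with strictly positive exponent in $\sigma^{**}(M_i^{h_i})$.

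I would finish by comparing exponents on both sides of $A_1 = \sigma^{**}(A_1)$. Reading off the exponent of $x$ yields $2^n-1 = (2^m-1) + \sum_{i} e_i$, where $e_i$ is the exponent of $x$ in $\sigma^{**}(M_i^{h_i})$; reading off the exponent of $x+1$ yields $2^m-1 = (2^n-1) + \sum_i f_i$ with $f_i \geq 0$ the corresponding exponents of $x+1$. If some $h_j \geq 1$, then by the key step both correction sums are strictly positive, forcing simultaneously $n>m$ and $m>n$, which is absurd. If instead all $h_i = 0$, then $A_1 = x^a(x+1)^b$ has $\omega(A_1) = 2$, contradicting $\omega(A_1) \geq 3$. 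Either way a contradiction results, proving the lemma. The only delicate point is the key step, namely confirming that $1+M_i$ genuinely divides $\sigma^{**}(M_i^{h_i})$ for all admissible $h_i$ and carries both even primes; once that is secured, the valuation bookkeeping is immediate.
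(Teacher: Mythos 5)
Your proof is correct and follows essentially the same route as the paper: both reduce to the identity $x^a(x+1)^b\prod M_i^{h_i}=(x+1)^a x^b\prod\sigma^{**}(M_i^{h_i})$ and exploit the fact that each $\sigma^{**}(M_i^{h_i})$ with $h_i\geq 1$ is divisible by $1+M_i=x^{a_i}(x+1)^{b_i}$. You merely unpack the paper's terse ``Thus $a=b$ and $\prod M_i^{h_i}$ is b.u.p, contradicting Lemma \ref{nombreminimal}'' into explicit valuation bookkeeping at $x$ and $x+1$.
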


\begin{proof}
If $a =2^n-1$ and $b= 2^m-1$ for some $n,m\geq 1$, then we get
$\sigma^{**}(x^a) = (x+1)^a$ and $\sigma^{**}((x+1)^b) = x^b$ (cf. Table \ref{table2}). Thus,
$$x^a(x+1)^b M_1^{h_1} \cdots M_5^{h_5} =A_1=\sigma^{**}(A_1)=
(x+1)^a x^b \sigma^{**}(M_1^{h_1}) \cdots \sigma^{**}(M_5^{h_5}).$$
Hence, $x^b$ and $(x+1)^a$ both divide $A_1$. Thus, $b \leq a \leq b$. So, $a = b$. It follows that $\displaystyle{M_1^{h_1} \cdots M_5^{h_5}}$ is b.u.p., which
contradicts Corollary \ref{nombreminimal}.
\end{proof}

By direct computations (sketched in Section \ref{compute}), we get Proposition \ref{A1-b-u-perfect}
from Lemmas \ref{mersenneaetb} and \ref{mersennabodd}.

Set $K_1=\{0,1,2,3,4,5,6,7,11,23\}$ and $K_2 = \{0,1,2,3,4,6,7,15\}$.

\begin{lemma}\label{mersenneaetb}
\begin{itemize}
\item[\rm{(i)}] If $a$ and $b$ are both even, then $a,b \leq 14$ and $h_i \in K_1$.
\item[\rm{(ii)}] If $a$ is even and $b$ odd, then $a \leq 14, b = 2^{\beta}v-1$, with $\beta \leq 3, v\leq 7$,
$v$ odd and $h_i \in K_1$.
More precisely,
$h_3= h_2,  h_2 \in \{0,2,4,6\}$ and $h_1,h_4, h_5 \in \{0,1,2,3,7,15\}$.
\end{itemize}
\end{lemma}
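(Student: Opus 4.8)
The plan is to exploit the fixed‑point identity $A_1=\sigma^{**}(A_1)$ together with the multiplicativity of $\sigma^{**}$, which gives
\[
x^a(x+1)^b\prod_{i=1}^5 M_i^{h_i}=\sigma^{**}(x^a)\,\sigma^{**}\big((x+1)^b\big)\prod_{i=1}^5\sigma^{**}\big(M_i^{h_i}\big).
\]
Since every odd irreducible factor of $A_1$ is a Mersenne prime, the same holds for every factor on the right. First I would bound $a$ and $b$: because $a$ is even and $\sigma^{**}(x^a)$ has only Mersenne primes as odd divisors, Lemma~\ref{canaday-b-u_p-1} forces $a\le 14$, and the same argument gives $b\le 14$ in case~i). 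In case~ii), $b$ odd with $\sigma^{**}((x+1)^b)$ Mersenne‑divisible makes Lemma~\ref{canaday-b-u_p-2} (applied to $x+1$) yield $b=2^{\beta}v-1$ with $\beta\ge 1$ and $v\in\{3,5,7\}$, so $v\le 7$ is odd.

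The engine for all remaining bounds is a comparison of the valuations $v_x$ and $v_{x+1}$ on the two sides. The decisive simplification is that the factors $\sigma(M_i^{2r})$ are odd and square‑free (Lemma~\ref{complete2}~i)), while $\sigma(x^{2r})$ and $\sigma((x+1)^{2r})$ are odd by inspection of their constant term and value at $1$; consequently the only source of the primes $x$ and $x+1$ on the right is the factor $1+M_i=x^{c_i}(x+1)^{d_i}$ in each $\sigma^{**}(M_i^{h_i})$, together with $1+(x+1)=x$ in $\sigma^{**}((x+1)^b)$. Setting $e_i:=v_{1+M_i}\big(\sigma^{**}(M_i^{h_i})\big)$ and using $x\nmid\sigma^{**}(x^a)$ (Corollary~\ref{aboutsigmastar2}~ii)), the $x$‑valuation identity reads
\[
a=v_x\big(\sigma^{**}((x+1)^b)\big)+\sum_{i=1}^5 c_i e_i,
\]
where in case~ii) the first summand equals $2^{\beta}-1$ by Corollary~\ref{expressigmastar}~iii). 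As all terms are nonnegative and $a\le 14$, this gives $2^{\beta}-1\le 14$, i.e.\ $\beta\le 3$, and $c_ie_i\le 14$, hence $e_i\le 14$ for every $i$ (since $c_i\ge1$). Reading $e_i$ off Corollary~\ref{expressigmastar} — $e_i=h_i$ when $h_i=2^n-1$, and $e_i=2^n-1$ when $h_i=3\cdot2^n-1$ — and combining with the even values $h_i\in\{2,4,6\}$ permitted by Corollary~\ref{caseperfect}, the inequality $e_i\le 14$ leaves precisely $h_i\in K_1$.

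For the sharper ranges in case~ii) I would treat $M_2,M_3$ and $M_1,M_4,M_5$ separately. By Corollary~\ref{M2M3}~i) neither $M_2$ nor $M_3$ divides any $\sigma^{**}(M_j^{h_j})$, so the exponents $h_2,h_3$ are produced solely by $\sigma^{**}(x^a)$ and $\sigma^{**}((x+1)^b)$; Corollary~\ref{M2M3}~iv) lists the few admissible $a,b$ and the exact multiplicities they contribute, which both forces $h_2=h_3$ and confines this common value to the stated set. For $h_1,h_4,h_5$ one must add to the direct contributions of $\sigma^{**}(x^a),\sigma^{**}((x+1)^b)$ the contributions coming from $\sigma^{**}(M_2^{h_2})$ and $\sigma^{**}(M_3^{h_3})$ (Corollary~\ref{M2M3}~ii)), and then feed this back into the $v_x$‑ and $v_{x+1}$‑identities; the asymmetry of the $c_i,d_i$ (for instance $d_4=3$ since $1+M_4=x(x+1)^3$) sharpens $e_i\le 14$ to the claimed range.

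The main obstacle is precisely this coupling. The valuation relations form a genuine system in the seven unknowns $a,b,h_1,\dots,h_5$, and the same Mersenne prime — notably $M_1,M_4,M_5$ — is generated simultaneously by several factors, so the equations cannot be solved one at a time. What rescues the argument is the localization supplied by Lemma~\ref{complete2}~i): since every $\sigma$ of an even power is odd, the $x$‑ and $(x+1)$‑valuations detect only the $1+M_i$ factors, turning an a~priori intractable identity into explicit finite bounds. Once these bounds are secured only finitely many tuples survive, and (as indicated in Section~\ref{compute}) the remaining candidates are sifted by direct Maple computation.
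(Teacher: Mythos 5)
Your argument is correct and is essentially the paper's own proof: $a,b\le 14$ because the odd factors $\sigma(x^{2r})$, $\sigma((x+1)^{2s})$ and $\sigma((x+1)^{v-1})$ of $A_1=\sigma^{**}(A_1)$ can only have Mersenne primes as divisors (so $2r,2s,v-1\le 6$), and the bound $n\le 3$ on an odd exponent $h_i=2^nu-1$ comes, exactly as in the paper, from comparing exponents of $x$ on both sides, the only contributions on the right being the powers of $1+M_i$ and the power of $x$ coming from $\sigma^{**}((x+1)^b)$. The one place you go beyond the paper is the closing ``more precisely'' clause, which the paper's proof does not address at all; note that your sketch via Corollary~\ref{M2M3} still leaves $h_2=h_3\in\{1,3,5\}$ open (for instance $a\in\{12,14\}$ contributes $M_2M_3$ to $\sigma^{**}(x^a)$ with multiplicity one, so $h_2=1$ is not excluded by valuations alone), but this is immaterial since the Maple search in Section~\ref{compute} only uses the bound $h_i\in K_1$.
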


\begin{proof} According to Corollary \ref{caseperfect}, it remains to give upper bounds
for $a, b$ and for $h_i$, if $h_i$ is odd.

(i) If $a$ and $b$ are both even, then $a \ ($resp. $b)$ is of the form $4r$ or $4r+2$, $($resp. $4s$ or $4s+2)$.
Thus,
$\sigma(x^{2r})$ and $\sigma((x+1)^{2s})$ are both odd divisors
of $\sigma^{**}(A_1)=A_1$. Hence, they belong to ${\mathcal{M}}$. So, by Lemma \ref{complete2}-iv), one has $2r, 2s \leq 6$ and $a, b\leq 14$.
If $h_i$ is odd, then it is of the form $2^nu-1$, with $u \in \{1,3\}$ (Lemma \ref{hjvalues}). So,
$\sigma^{**}({M_i}^{h_i}) = \sigma({M_i}^{h_i}) = (1+M_i)^{2^n-1} (\sigma({M_i}^{u-1}))^{2^n}$.
Thus, $2^n-1 \leq a \leq 14$, by considering the exponents of $x$ in $A_1$ and in $\sigma^{**}(A_1)$.
We get $n \leq 3$ and $h_i \in K_1$.

(ii) In this case, $x^{2^{\beta} -1}$ divides $\sigma^{**}((x+1)^b)$ which in turn, divides $\sigma^{**}(A_1) = A_1$.
So, $2^{\beta} -1 \leq a$. Since $a \leq 14$ as we have seen in i), one has $\beta \leq 3$. Moreover,
$\sigma((x+1)^{v-1})$ lies in $\{1, M_1, M_2M_3, M_5\}$. We deduce (Lemma \ref{complete2}) that $v\leq 7$.
As above, $h_i \in K_1$.
\end{proof}

\begin{lemma}\label{mersennabodd}
If $a$ and $b$ are both odd, then $a = 2^{\alpha}u-1$, $b = 2^{\beta}v-1$ with $u,v \leq 7$,
$u, v$ both odd, $(u,v) \not= (1,1)$,
$1\leq \alpha, \beta \leq 3$ and $h_i \in K_2$.
\end{lemma}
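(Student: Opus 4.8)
The plan is to mirror the structure of Lemma~\ref{mersenneaetb} but now under the hypothesis that both $a$ and $b$ are odd. Since $A_1$ is b.u.p we have the master identity $A_1=\sigma^{**}(A_1)=\sigma^{**}(x^a)\,\sigma^{**}((x+1)^b)\,\prod_{i=1}^5\sigma^{**}(M_i^{h_i})$, and Corollary~\ref{caseperfect} already tells us the admissible shapes of the $h_i$. The first step is to write $a=2^{\alpha}u-1$ and $b=2^{\beta}v-1$ with $u,v$ odd, which is always possible for odd $a,b$; the content of the lemma is to bound $\alpha,\beta,u,v$ and to pin down the $h_i$ to the set $K_2$. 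By Lemma~\ref{aoubsup3} we immediately cannot have $u=v=1$ simultaneously (that would force $a=2^{\alpha}-1$, $b=2^{\beta}-1$), which gives the constraint $(u,v)\neq(1,1)$.

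First I would extract the odd part of the factorization. By Corollary~\ref{expressigmastar}(iii), $\sigma^{**}(x^a)=(1+x)^{2^{\alpha}-1}(\sigma(x^{u-1}))^{2^{\alpha}}$ and symmetrically $\sigma^{**}((x+1)^b)=x^{2^{\beta}-1}(\sigma((x+1)^{v-1}))^{2^{\beta}}$. The odd divisors of $A_1$ are exactly the Mersenne primes $M_1,\dots,M_5$, so every odd irreducible factor of $\sigma(x^{u-1})$ must be Mersenne; by Lemma~\ref{complete2}(iv) this forces $u-1\in\{0,2,4,6\}$, i.e.\ $u\in\{1,3,5,7\}$, and likewise $v\in\{1,3,5,7\}$, giving $u,v\leq 7$. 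For the bound on $\alpha,\beta$ I would compare exponents of $x$ on the two sides of the master identity: the factor $x^{2^{\beta}-1}$ coming from $\sigma^{**}((x+1)^b)$ contributes to the power of $x$ dividing $A_1$, which is $a=2^{\alpha}u-1\leq 14$ once we also bound $a$; in fact the cleanest route is to observe that $\sigma(x^{u-1})$ and $\sigma((x+1)^{v-1})$ are genuine odd (hence Mersenne) divisors, and then read off that the exponent $2^{\alpha}-1$ of $(1+x)$ cannot exceed $b$ while $2^{\beta}-1$ cannot exceed $a$, each of which is at most $14$ by the same $\sigma(x^{2r})\leq\sigma(x^6)$ argument used in Lemma~\ref{mersenneaetb}; this yields $2^{\alpha},2^{\beta}\leq 8$, so $\alpha,\beta\leq 3$, and since $a,b$ are odd we have $\alpha,\beta\geq 1$.

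Next I would bound the Mersenne exponents $h_i$. Each $\sigma^{**}(M_i^{h_i})$ must factor entirely over $\{x,x+1\}\cup\mathcal{M}$. For odd $h_i=2^nw-1$ (with $w\in\{1,3\}$ by Corollary~\ref{caseperfect}), Corollary~\ref{expressigmastar}(iii) gives a factor $(1+M_i)^{2^n-1}$, and $1+M_i$ is divisible by $x$ and $x+1$; comparing the resulting power of $x$ with $a\leq 14$ forces $2^n-1\leq 14$, hence $n\leq 3$, so the odd values of $h_i$ lie in $\{1,3,7,15\}$ (the value $2^3\cdot1-1=7$ and $2^4\cdot1-1=15$, together with $2^1\cdot1-1=1$, $2^2\cdot1-1=3$), while the even admissible values from Corollary~\ref{caseperfect} are $\{0,2,4,6\}$. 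Intersecting with the shape $h_3=h_2$, $h_2\in\{0,2,4,6\}$ forced by the $M_2$--$M_3$ coupling (Corollary~\ref{M2M3}) and the Mersenne-divisor restrictions of Lemmas~\ref{canaday-b-u_p-3} and~\ref{canaday-b-u_p-4}, all $h_i$ land in $K_2=\{0,1,2,3,4,6,7,15\}$.

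The main obstacle I anticipate is getting the $x$-exponent bookkeeping exactly right so that the bound on $\alpha,\beta$ comes out as $\leq 3$ rather than something weaker. The delicate point is that $x$ appears on the right-hand side of the master identity from several sources at once: from $\sigma^{**}((x+1)^b)$ (contributing $x^{2^{\beta}-1}$), from each $\sigma^{**}(M_i^{h_i})$ via the factors $(1+M_i)^{2^n-1}$ and $\sigma(M_i^{\cdots})$ (since $M_i\equiv 1$ modulo both $x$ and $x+1$, so $1+M_i$ is divisible by $x(x+1)$), and these must sum to exactly $a$. I would therefore first establish the uniform bound $a,b\leq 14$ — which here needs an argument distinct from the even case, since $\sigma(x^{u-1})$ is now the only odd $x$-divisor and $a=2^{\alpha}u-1$ need not be bounded a priori; the resolution is that the exponent $2^{\alpha}-1$ of $(1+x)$ dividing $\sigma^{**}(x^a)$ must itself be at most $b$, and symmetrically, so that $a$ and $b$ mutually bound each other's $2$-adic parts, and the factor $u\leq 7$ then caps $a$ — and only afterwards deduce the $\leq 3$ bounds on $\alpha,\beta,n$. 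Once $a,b\leq 14$ are in hand, the remaining inequalities are the same routine exponent comparisons as in Lemma~\ref{mersenneaetb}, and the conclusion $h_i\in K_2$ follows by intersecting the two lists of admissible values.
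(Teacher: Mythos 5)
There is a genuine gap in your argument for bounding $\alpha$ and $\beta$, and it propagates into your bounds on $a$, $b$ and the $h_i$. You propose to cap $\alpha,\beta$ by the ``mutual bounding'' $2^{\alpha}-1\leq b$ and $2^{\beta}-1\leq a$ together with $u,v\leq 7$. These inequalities are correct (the exponent of $x+1$ in $\sigma^{**}(x^a)$ cannot exceed $b$, and symmetrically), but they read $2^{\alpha}\leq 7\cdot 2^{\beta}$ and $2^{\beta}\leq 7\cdot 2^{\alpha}$, which are satisfied for arbitrarily large $\alpha=\beta$. Nothing in your argument breaks this circularity, so you never actually obtain $\alpha,\beta\leq 3$, nor any absolute bound on $a$ and $b$. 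Moreover, your claimed bound $a,b\leq 14$ is inconsistent with your own conclusion $h_i\in K_2\ni 15$: an odd exponent $h_i=2^n-1=15$ contributes $(1+M_i)^{15}$ to $\sigma^{**}(A_1)$ and hence forces $a\geq 15$; the paper only establishes $a,b\leq 27$ in this case, and its value $15\in K_2$ comes from $2^n-1\leq 27$, i.e.\ $n\leq 4$, not $n\leq 3$ as you write (note also that $\{1,3,7,15\}$ is not what ``$n\leq 3$'' gives).

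The missing idea is the one the paper's proof is built on: compare the exponent of each Mersenne prime $M_j$ dividing $\sigma(x^{u-1})$ or $\sigma((x+1)^{v-1})$ on both sides of $A_1=\sigma^{**}(A_1)$. In the odd--odd case that exponent on the right is $\varepsilon_1 2^{\alpha}+\varepsilon_2 2^{\beta}$ (plus, for $M_1,M_4,M_5$, known contributions from $\sigma^{**}(M_2^{h_2})$, $\sigma^{**}(M_3^{h_3})$), while on the left it is $h_j$, which Corollary~\ref{caseperfect} restricts to $\{0,2,4,6,2^n-1,3\cdot 2^n-1\}$. Running through the cases $(u,v)$ with $u\leq v$: if $u=7$ or $v=7$ one gets $h_2\in\{2^{\alpha},2^{\beta},2^{\alpha}+2^{\beta}\}$, hence $h_2$ even, $h_2\leq 6$, so $\alpha,\beta\leq 2$ and $a,b\leq 27$; if $u,v\leq 5$ then $h_2=h_3=0$ and the subcases $(1,3)$, $(1,5)$, $(3,5)$, $(5,5)$ each force one of $h_1,h_4,h_5$ to equal $2^{\alpha}$ or $2^{\beta}$, hence to equal $2$, pinning $\beta=1$ (or $\alpha=1$) first, after which your inequality $2^{\alpha}-1\leq b$ finally does its job; and $(3,3)$ is eliminated because it would force $h_1=2^{\alpha}+2^{\beta}\geq 4$, which is not an admissible value. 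Only this case analysis delivers $1\leq\alpha,\beta\leq 3$ and $h_i\in K_2$; without it your proof does not close.
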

\begin{proof} We give upper bounds
for $a, b$ and for $h_i$, if $h_i$ is odd.
One has
$$\text{$\sigma^{**}({x}^{a}) = (x+1)^{2^{\alpha}-1} (\sigma({x}^{u-1}))^{2^{\alpha}}$,
$\sigma^{**}({(x+1)}^{b}) = x^{2^{\beta}-1} (\sigma({(x+1)}^{v-1}))^{2^{\beta}}$.}$$
Without loss of generality, we may suppose that $u \leq v$.\\
$\bullet$ If $u=7$ or $v = 7$, then $h_2 \not= 0$ and $h_2 \in \{2^{\alpha}, 2^{\beta}, 2^{\alpha} + 2^{\beta}\}$
(compare $h_2$
with all possible exponents of $M_2$ in $\sigma^{**}(A_1)$).
So, $h_2$ is even and thus $h_3=h_2 \leq 6$, $\alpha, \beta \leq 2$ and $a,b \leq 27$.\\
Now, for $i \in \{1,4,5\}$ with $h_i$ odd,  one has: $h_i=2^n-1 \leq a \leq 27$, so that $n\leq 4$ and
$h_i \in \{1,3,7,15\}$.\\
$\bullet$ If $u,v \leq 5$, then $h_3=h_2 = 0$. For $j \in \{1,4,5\}$,  $M_j$ divides $\sigma^{**}(A_1)$ if
and only if it
divides $\sigma^{**}({x}^{a}) \sigma^{**}({(1+x)}^{b})$.

- The case $u=v=1$ does not happen because $A_1$ does not split.

- $\text{$h_1 = 2^{\beta} \leq 2$,
$2^{\alpha} - 1 \leq b=3 \cdot 2^{\beta}-1$, $\beta = 1$, $\alpha \leq 2$}$ if $u=1$ and $v=3$.

- $\text{$h_5 = 2^{\beta} \leq 2$,
$2^{\alpha} - 1 \leq b=5 \cdot 2^{\beta}-1$, $\beta = 1$, $\alpha \leq 3$}$ if $u=1$ and $v=5$.

- If $u=v=3$, then $h_1$ is even and $h_1 = 2^{\alpha} + 2^{\beta} \geq 4$, which is impossible.

- $h_1 = 2^{\alpha} \leq 2$, $h_5 = 2^{\beta} \leq 2$,
$\alpha=\beta = 1$  if $u=3$ and $v=5$.

- $h_4 = 2^{\alpha} \leq 2$, $h_5 = 2^{\beta} \leq 2$ and $\alpha=\beta = 1$ if $u=v=5$.
\end{proof}
\subsection{Maple Computations}\label{compute}
According to Lemmas \ref{mersenneaetb} and \ref{mersennabodd}, we determine, in 4 steps,
the set $L$ of all 7-uples $[a,b,h_1,h_2,h_3,h_4,h_5]$
such that $a \leq b$. Then, we search $S=\displaystyle{x^a(x+1)^b {M_1}^{h_1} {M_2}^{h_2} {M_3}^{h_3} {M_4}^{h_4}
{M_5}^{h_5}}$ satisfying: $\sigma^{**}(S)= S$.\\
The case where $a \geq b$ is obtained from the substitution: $x \longleftrightarrow x+1$.\\
1) If $a$ and $b$ are even, then $b \in \{0,2,4,6,8,10,12,14\}$ and
$h_i \in K_1$.\\
2) If $a$ is even and $b$ odd, then $b \in \{1, 3, 5, 7, 9, 11, 13, 19, 23, 27, 39, 55\}$,
$a \in \{0,2,4,6,8,10,12,14\}$ and $h_i \in K_1$.\\
3) If $a$ is odd and $b$ even, then $a \in \{1, 3, 5, 7, 9, 11, 13, 19, 23, 27, 39, 55\}$,
$b \in \{0,2,4,6,8,10,12,14\}$ and $h_i \in K_1$.\\
4) If $a$ and $b$ are odd, then $a, b \leq 27$ and
$h_i \in K_2$, $h_2=h_3 = 0$ if $u,v \leq 5$.\\
\\
We get the following results.
\begin{center}
\begin{tabular}{|l|l|l|l|}
\hline
Case&{card}(L)& b.u.p. polynomials&Calculation time\\
\hline
&&&\\
1)&$35000$&$C_3, C_4, C_8, C_{13}, C_{14}, C_{15}$& 6 min\\
\hline
&&&\\
2)&$70000$&$C_5, C_9, C_{16}, C_{18}, C_{20}$& 15 min\\
\hline
&&&\\
3)&$35000$&$C_1, C_6, C_{10}, C_{21}, C_{22}, C_{23}$& 6 min\\
\hline
&&&\\
4)&$97500$&$C_2, C_7, C_{11}, C_{12}, C_{17}, C_{19}$& 30 min\\
\hline
\end{tabular}
\end{center}~\\
{\bf{The function $\sigma^{**}$ is defined as}} Sigm2star in the {\tt{Maple}} code.
\begin{verbatim}
> Sigm2star1:=proc(S,a) if a=0 then 1;else if a mod 2 = 0
then n:=a/2:sig1:=sum(S^l,l=0..n):sig2:=sum(S^l,l=0..n-1):
Factor((1+S)*sig1*sig2) mod 2:
else Factor(sum(S^l,l=0..a)) mod 2:fi:fi:end:
> Sigm2star:=proc(S) P:=1:L:=Factors(S) mod 2:k:=nops(L[2]):
for j to k do S1:=L[2][j][1]:h1:=L[2][j][2]:
P:=P*Sigm2star1(S1,h1):od:P:end:
\end{verbatim}
\section{Conjecture}
We remark that for $A \in \{C_j: 1\leq j \leq 13\}$, one has $\omega(A) \in \{3,4\}$. After several computations, we may establish
\begin{conjecture}
Let $A \in \F_2[x]$ be such that $A$ does not split and $\omega(A) \leq 4$. Then, $A$ is b.u.p if and only if
$A \in \{\{C_j: 1\leq j \leq 13\} \cup \{D_1,D_2\}$, where $D_1 = x^4(x+1)^5 (1+x+x^2)(1+x+x^4), D_2 = x^4(x+1)^5 (1+x+x^2)^5(1+x+x^4)^2$.
\end{conjecture}
Note that both $D_1$ and $D_2$ are divisible by the non Mersenne prime $S_1=1+x+x^4$. In the future work, we shall try to prove this conjecture.


\begin{thebibliography}{HD82}

\bibitem{BeardU} J. T. B. Beard Jr,
\emph{Unitary perfect polynomials over $GF(q)$},
 Atti Accad. Naz. Lincei Rend. CI. Sci. Fis. Mat. Nat. 62 (1977), 417--422.

\bibitem{BeardBiunitary} J. T. B. Beard Jr,
\emph{Bi-unitary Perfect polynomials over $GF(q)$},
Annali di Mat. Pura ed Appl. 149 (1987), no. 1, 61--68.

\bibitem{Canaday} E. F. Canaday,
\emph{The sum of the divisors of a polynomial},
Duke Math. J. 8 (1941), 720--737.

\bibitem{Gall-Rahav7} L. H. Gallardo and O. Rahavandrainy,
\emph{There is no odd perfect polynomial over $\F_2$ with four prime factors},
Port. Math. (N.S.) 66 (2009), no. 2, 131--145.

\bibitem{Gall-Rahav5} L. H. Gallardo and O. Rahavandrainy,
\emph{Even perfect polynomials over $\F_2$ with four prime factors},
Int. J. Pure Appl. Math. 52 (2009), no. 2, 301--314.

\bibitem{Gall-Rahav11} L. H. Gallardo and O. Rahavandrainy,
\emph{All unitary perfect polynomials over $\F_{2}$ with at most four distinct irreducible factors},
J. Symbolic Comput. 47 (2012), 492--502.

\bibitem{Gall-Rahav12} L. H. Gallardo and O. Rahavandrainy,
\emph{On even (unitary) perfect polynomials over $\F_2$},
Finite Fields Appl. 18 (2012), 920--932.

\bibitem{Gall-Rahav13} L. H. Gallardo and O. Rahavandrainy,
\emph{Characterization of Sporadic perfect polynomials over $\F_2$},
Funct. Approx. Comment. Math. 55 (2016), no. 1, 7--21.

\bibitem{Gall-Rahav-mersenn} L. H. Gallardo and O. Rahavandrainy,
\emph{On Mersenne polynomials over $\F_{2}$},
Finite Fields Appl. 59 (2019), 284--296.

\bibitem{Rahav} O. Rahavandrainy,
\emph{Familles de polyn\^omes unitairement parfaits sur $\F_2$},
C. R. Math. Acad. Sci. Paris 359 (2021), no. 2, 123--130.

\bibitem{Gall-Rahav-up-allmersenne} L. H. Gallardo and O. Rahavandrainy,
\emph{All even (unitary) perfect polynomials over $\F_2$ with only Mersenne primes as odd divisors},
Kragujevac J. Math. 49 (2025), no. 4, 639--652


\bibitem{Wall} C. R. Wall,
\emph{Bi-unitary perfect numbers},
Proc. Amer. Math. Soc. 33 (1972), 39--42.

\bibitem{Yamada} T. Yamada,
\emph{2 and 9 are the only biunitary superperfect numbers},
Ann. Univ. Sci. Budapest. Sect. Comput. 48 (2018), 247--256.

\end{thebibliography}
\end{document}